\newtheorem{theorem}{Theorem}[section]
\newtheorem{lemma}[theorem]{Lemma}
\theoremstyle{definition}
\newtheorem{example}[theorem]{Example}
\newtheorem{proposition}[theorem]{Proposition}
\DeclareMathOperator{\dist}{dist}
\DeclareMathOperator{\diam}{diam}
\newcommand*{\bR}{\ensuremath{\mathbb{R}}}
\newcommand*{\bdary}[1]{\partial #1}
\newcommand\numberthis{\addtocounter{equation}{1}\tag{\theequation}}
\def\Xint#1{\mathchoice
  {\XXint\displaystyle\textstyle{#1}}%
  {\XXint\textstyle\scriptstyle{#1}}%
  {\XXint\scriptstyle\scriptscriptstyle{#1}}%
  {\XXint\scriptscriptstyle\scriptscriptstyle{#1}}%
  \!\int}
\def\XXint#1#2#3{{\setbox0=\hbox{$#1{#2#3}{\int}$}
  \vcenter{\hbox{$#2#3$}}\kern-.5\wd0}}
\def\dashint{\Xint-}
\theoremstyle{remark}
\newtheorem{remark}[theorem]{Remark}
\numberwithin{equation}{section}
\begin{document}

\title{Fractional Sobolev-Poincar\'e inequalities in irregular domains}

\author{Chang-Yu Guo}
\address[Chang-Yu Guo]{Department of Mathematics and Statistics, University of Jyv\"askyl\"a, P.O. Box 35, FI-40014 University of Jyv\"askyl\"a, Finland}
\email{changyu.c.guo@jyu.fi}
\thanks{C.Y.Guo was supported by the Magnus Ehrnrooth foundation.}


\subjclass[2010]{46E35, 26D10}



\keywords{Fractional Sobolev-Poincar\'e inequality, $s$-John domain, separation property, quasihyperbolic boundary condition}

\begin{abstract}
This paper is devoted to the study of fractional $(q,p)$-Sobolev-Poincar\'e inequalities in irregular domains. In particular, we  establish (essentially) sharp fractional $(q,p)$-Sobolev-Poincar\'e inequality in $s$-John domains and in domains satisfying the quasihyperbolic boundary conditions. When the order of the fractional derivative tends to 1, our results tends to the results for the usual derivative. Furthermore, we verified that those domains that support the fractional$(q,p)$-Sobolev-Poincar\'e inequality together with a separation property are $s$-diam John domains for certain $s$, depending only on the associated data. We also point out an inaccurate statement in~\cite{bk95}.
\end{abstract}

\maketitle

\section*{Introduction}\label{sec:intro}
Recall that a bounded domain $\Omega\subset \bR^n$ is a John domain if there
is a constant $C$ and a point $x_0\in \Omega$ so that, for each $x\in \Omega,$
one can find a rectifiable curve $\gamma:[0,1]\to \Omega$ with $\gamma(0)=x,$
$\gamma(1)=x_0$ and with
\begin{equation} \label{eka}
    Cd(\gamma(t),\bdary\Omega)\ge l(\gamma([0,t]))
\end{equation}
for each $0<t\le 1.$ F. John used this condition in his work on
elasticity~\cite{j61} and the term was coined by
Martio and Sarvas~\cite{ms79}. Smith and Stegenga \cite{ss90} introduced
the more general concept of $s$-John domains, $s\ge 1,$ by replacing \eqref{eka}
with
\begin{equation} \label{toka}
    Cd(\gamma(t),\bdary\Omega)\ge l(\gamma([0,t]))^s.
\end{equation}
The condition~\ref{eka} is called a ``twisted cone condition" in literature. Thus condition~\ref{toka} should be called a ``twisted cusp condition". 

In the last twenty years, $s$-John domains has been extensively studied in connection with Sobolev type inequalities; see~\cite{bk95,hk98, hk00, km00,kot02, ss90}. In particular, Buckley and Koskela~\cite{bk95} have shown that a simply connected planar domain which supports a Sobolev-Poincar\'e inequality is an $s$-John domain for an appropriate $s$. Smith and Stegenga have shown that an $s$-John domain $\Omega$ is a $p$-Poincar\'e domain, provided $s<\frac{n}{n-1}+\frac{p-1}{n}$. In particular, if $s<\frac{n}{n-1}$, then $\Omega$ is a $p$-Poincar\'e domain for all $1\leq p<\infty$. These results were further generalized to the case of $(q,p)$-Poincar\'e domains in~\cite{hk98, km00, kot02}. Recall that a bounded domain $\Omega\subset \bR^n$, $n\geq 2$, is said to be a $(q,p)$-Poincar\'e domain if there exists a constant $C_{q,p}=C_{q,p}(\Omega)$ such that
\begin{equation}\label{def:poincare domain}
\Big(\int_\Omega |u(x)-u_\Omega|^q dx\Big)^{1/q}\leq C_{q,p}\Big(\int_\Omega |\nabla u(x)|^p dx\Big)^{1/p}
\end{equation} 
for all $u\in C^\infty(\Omega)$. Here $u_\Omega=\dashint_{\Omega}u(x)dx$. When $q=p$, $\Omega$ is termed a $p$-Poincar\'e domain and when $q>p$ we say that $\Omega$ supports a Sobolev-Poincar\'e inequality.

In this paper, we consider the following fractional $(q,p)$-Sobolev-Poincar\'e inequality in a domain $\Omega\subset\bR^n$ with finite Lebesgue measure, $n\geq 2$:
\begin{align}\label{eq:fractional Sob Poin}
\int_\Omega|u(x)-u_\Omega|^qdx\leq C\Big(\int_\Omega\int_{\Omega\cap B(x,\tau d(x,\bdary\Omega))}\frac{|u(x)-u(y)|^p}{|x-y|^{n+p\delta}}dy dx \Big)^{q/p},
\end{align}
where $1\leq p\leq q<\infty$, $\delta\in (0,1)$, $\tau \in (0,\infty)$ and the constant $C$ does not depend on $u\in C(\Omega)$. If $\Omega$ supports the fractional $(q,p)$-Sobolev-Poincar\'e inequality~\eqref{eq:fractional Sob Poin}, $q\geq p$, then we say that $\Omega$ is a fractional $(q,p)$-Sobolev-Poincar\'e domain. 

From now on, unless specified, $\delta\in (0,1)$ and $\tau \in (0,\infty)$ will be fixed constants. Given a function $u\in C(\Omega)$, we define $g_u:\Omega\to \bR$ as 
\begin{align}\label{eq:def g u}
g_u(x)=\int_{\Omega\cap B(x,\tau d(x,\bdary\Omega))}\frac{|u(x)-u(y)|^p}{|x-y|^{n+p\delta}}dy
\end{align}
for $x\in \Omega$.

\begin{theorem}\label{thm:equivalence of Sob poi via capacity}
Let $\Omega\subset\bR^n$, $n\geq 2$, be a domain with finite Lebesgue measure and $1\leq p\leq q<\infty$. Then the following statements are equivalent:
\begin{itemize}
\item[i)] $\Omega$ satisfies the fractional $(q,p)$-Sobolev-Poincar\'e inequality;

\item[ii)] For an arbitrary ball $B_0\subset \Omega$ there exists a constant $C=C(\Omega,p,q,B_0)$ such that
\begin{align}\label{eq:capacity required}
|A|^{p/q}\leq C\inf \int_\Omega g_u(x)dx
\end{align}
for every measurable set $A\subset \Omega$ such that $A\cap B_0=\emptyset$. The infimum above is taken over all functions $u\in C(\Omega)$ that satisfy $u|_A\geq 1$ and $u|_{B_0}=0$.
\end{itemize}

\end{theorem}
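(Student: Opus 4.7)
I treat the two implications separately.

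\emph{Direction (i) $\Rightarrow$ (ii).} Given an admissible test function $u \in C(\Omega)$ with $u|_A \geq 1$ and $u|_{B_0} = 0$, replace $u$ by the truncation $v := \min(u,1)$. It is continuous, still satisfies $v|_A = 1$ and $v|_{B_0} = 0$, and, being a $1$-Lipschitz function of $u$, obeys $|v(x)-v(y)| \leq |u(x)-u(y)|$ pointwise, so $g_v \leq g_u$. Feeding $v$ into (i) gives
\[
\int_\Omega |v-v_\Omega|^q\,dx \leq C \Big(\int_\Omega g_u\,dx\Big)^{q/p}.
\]
A case split on whether $v_\Omega \leq 1/2$ or $v_\Omega > 1/2$ completes the argument: in the first case $|v-v_\Omega| \geq 1/2$ on $A$, yielding $|A|^{p/q} \lesssim \int_\Omega g_u$ directly; in the second case the same reasoning on $B_0$ gives $|B_0| \lesssim (\int_\Omega g_u)^{q/p}$, which, combined with the trivial bound $|A| \leq |\Omega|$, still yields the desired estimate with a constant depending only on $|\Omega|/|B_0|$. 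Taking the infimum over admissible $u$ produces (ii).

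\emph{Direction (ii) $\Rightarrow$ (i).} This is a Maz'ya-type capacitary strong-type implication via dyadic level sets. First I reduce to estimating $\int w^q$ for a continuous $w \geq 0$ vanishing on $B_0$: write $\int_\Omega |u-u_\Omega|^q \lesssim \int_\Omega |u-u_{B_0}|^q$, split $u-u_{B_0}$ into positive and negative parts, and force the vanishing on $B_0$ by a Lipschitz cut-off equal to $0$ on $B_0$ and to $1$ outside a slightly larger concentric ball, absorbing the resulting commutator error through the standard local fractional Poincar\'e inequality on $B_0$. Given such a $w$, set $A_k := \{w \geq 2^k\}$ for $k \in \mathbb Z$ and consider the piecewise linear truncation
\[
v_k(x) := \max\!\Big(0,\,\min\!\Big(1,\,\tfrac{w(x)}{2^k}-1\Big)\Big),
\]
which is continuous, vanishes on $B_0$, and is $\geq 1$ on $A_{k+1}$; hence (ii) gives $|A_{k+1}|^{p/q} \leq C \int_\Omega g_{v_k}$.

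The crux is the pointwise telescoping estimate
\[
\sum_{k\in\mathbb Z} 2^{kp}|v_k(x)-v_k(y)|^p \leq C\,|w(x)-w(y)|^p,
\]
which I verify by casework on the position of $w(x), w(y)$ relative to the dyadic scale: only boundedly many $k$'s produce a nonzero contribution, and on each such $k$ the Lipschitz bound $|v_k(x)-v_k(y)| \leq 2^{-k}|w(x)-w(y)|$ combined with the trivial bound $|v_k|\leq 1$ yields a geometric sum in $2^{kp}$ dominated by $|w(x)-w(y)|^p$. Integrating gives $\sum_k 2^{kp}\int_\Omega g_{v_k} \leq C\int_\Omega g_u$. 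Combined with the level-set estimate, the elementary subadditivity $\sum_k a_k^{q/p} \leq (\sum_k a_k)^{q/p}$ for nonnegative sequences (valid since $q \geq p$), and the layer-cake identity $\int_\Omega w^q \approx \sum_k 2^{kq}|A_k|$, we conclude $\int_\Omega w^q \leq C(\int_\Omega g_u)^{q/p}$, which unwinds to (i).

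\emph{Main obstacle.} The hardest step is the reduction to functions vanishing identically on the whole ball $B_0$. The shift $u \mapsto u - u_{B_0}$ only gives mean zero on $B_0$, not pointwise zero, so the cut-off/commutator route must be executed carefully to keep the resulting bound on the fractional energy compatible with the dyadic telescoping used in the main argument; tracking the dependence of constants on $B_0$ through this step is the delicate point.
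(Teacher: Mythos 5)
Your direction (i) $\Rightarrow$ (ii) is correct and is the paper's own argument (the truncation $v=\min(u,1)$ is a harmless extra). The Maz'ya truncation core of your (ii) $\Rightarrow$ (i) is also sound and essentially matches the paper: the paper works with the unnormalized truncations $v_j=\min\{2^j,\max\{0,v-2^j\}\}$ and bounds $\sum_j\int_\Omega g_{v_j}$ by splitting the double integral over level-set annuli $A_i\times A_j$, which is just the integrated form of your pointwise telescoping bound $\sum_k 2^{kp}|v_k(x)-v_k(y)|^p\lesssim|w(x)-w(y)|^p$. That pointwise bound is true, though your justification is loosely worded: when one of $w(x),w(y)$ is much smaller than the other, infinitely many $k$ contribute; what saves it is that $|v_k(x)-v_k(y)|\le 2^{-k}\,\mathrm{length}\big((w(y),w(x))\cap(2^k,2^{k+1})\big)$ and the dyadic bands are disjoint, so $\ell^1\subset\ell^p$ gives the claim with constant $1$.

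The genuine gap is in the step you yourself flag as the main obstacle: the reduction to a function vanishing identically on $B_0$. After the cutoff you must absorb the error $\int_{\{\eta<1\}}|u-u_{B_0}|^q\,dx$, and the ``standard local fractional Poincar\'e inequality'' this requires is the $(q,p)$ one on a ball. When $p\delta<n$ this inequality fails for every $q>np/(n-p\delta)$ (consider $u(x)\approx|x-x_1|^{-\alpha}$ with $n/q<\alpha<n/p-\delta$), while the theorem allows all $q<\infty$; so as written your reduction does not cover the stated range. (The cross term in the energy of $\eta(u-u_{B_0})_\pm$ is fine: it only needs the trivial local $(p,p)$ inequality, provided the ball is chosen small compared with $\tau\,d(\cdot,\partial\Omega)$ so that $g_u$ captures the full local seminorm.) The gap is closable, but an extra argument is needed: for instance, first show that condition (ii) itself forces $q\le np/(n-p\delta)$ when $p\delta<n$, by testing (ii) with bump functions adapted to small balls $B(z,r)$ at a fixed distance from $\partial\Omega$, whose energy is $\lesssim r^{n-p\delta}$ while $|A|^{p/q}\approx r^{np/q}$; the case $p\delta\ge n$ is then harmless since balls support all $(q,p)$ with $q<\infty$. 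For comparison, the paper avoids the cutoff entirely: it truncates $v=(u-b)_\pm$ with $b$ a median of $u$ over $\Omega$ and simply asserts that $2^{-j}v_j$ vanishes on $B_0$ and that $F_j$ misses $B_0$ --- an assertion that is not justified for general $u$ --- so you have correctly isolated the delicate point, but your proposed fix is incomplete as it stands.
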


\begin{theorem}\label{thm:main thm}
Let $\Omega\subset \bR^n$, $n\geq 2$, be an $s$-John domain. If $p<n/\delta$, $s<\frac{n}{n-p\delta}$ and $1\leq p\leq q<\frac{np}{s(n-p\delta)+(s-1)(p-1)}$, then $\Omega$ supports the fractional $(q,p)$-Sobolev-Poincar\'e inequality~\eqref{eq:fractional Sob Poin}.
\end{theorem}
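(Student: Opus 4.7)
My plan is to prove Theorem~\ref{thm:main thm} by verifying the capacity-type inequality of Theorem~\ref{thm:equivalence of Sob poi via capacity}. Pick a ball $B_0 \subset \Omega$ centered at the John center $x_0$ with $r(B_0) \simeq d(x_0, \partial\Omega)$. It then suffices to show that, for every measurable $A \subset \Omega$ with $A \cap B_0 = \emptyset$ and every continuous $u$ satisfying $u|_A \geq 1$, $u|_{B_0} = 0$, we have $|A|^{p/q} \lesssim \int_\Omega g_u(x)\, dx$, with a constant depending only on $\Omega, p, q, \delta, \tau$.

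\textbf{Whitney chaining from the $s$-John condition.} For each $x \in A$, the $s$-John curve $\gamma_x$ joining $x$ to $x_0$ satisfies $d(\gamma_x(t), \partial\Omega) \gtrsim \ell(\gamma_x|_{[0,t]})^s$. Along $\gamma_x$ I would select a chain of Whitney-type balls $B_1^x, B_2^x, \dots, B_{N_x}^x = B_0$ with $r_i^x \simeq d(c_i^x, \partial\Omega)$, consecutive overlap $|B_i^x \cap B_{i+1}^x| \simeq |B_i^x|$, and inflation $\lambda B_i^x \subset B(c_i^x, \tau d(c_i^x, \partial\Omega)) \subset \Omega$ for some fixed $\lambda > 1$, so that the $g_u$-integrand controls oscillations inside $\lambda B_i^x$. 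The first ball can be taken with $c_1^x = x$ and $r_1^x \simeq d(x, \partial\Omega)$.

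\textbf{Telescoping and local fractional Poincar\'e.} Since $u(x) \geq 1$ and $u_{B_0} = 0$, a standard telescoping gives
\[
1 \leq |u(x) - u_{B_1^x}| + \sum_{i=1}^{N_x - 1} |u_{B_i^x} - u_{B_{i+1}^x}|.
\]
The local fractional $(p,p)$-Poincar\'e inequality on each Whitney ball $B_i^x$ (whose $\tau$-inflation is in $\Omega$) yields
\[
|u_{B_i^x} - u_{B_{i+1}^x}| \lesssim (r_i^x)^{\delta}\, |B_i^x|^{-1/p} \Big( \int_{\lambda B_i^x} g_u(y)\, dy \Big)^{1/p}.
\]
Raising to the $p$-th power (with a geometric convexity weight $\varepsilon_i$ along the chain so that $\sum \varepsilon_i^{-p/(p-1)} < \infty$), integrating over $x \in A$ and swapping the sum over $i$ with the integral over $A$ reduces matters to a weighted sum over the Whitney decomposition of $\Omega$ with a multiplicity function that counts how many chains $\{\gamma_x\}_{x \in A}$ pass through a given Whitney ball.

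\textbf{Main obstacle.} The hard part is the sharp exponent count in the final step. Exploiting the $s$-John condition, the set of $x \in A$ whose chain passes through a Whitney ball $B$ of radius $r$ is contained in an $s$-cusp of transverse scale $r^{1/s}$, producing a multiplicity factor of the form $r^{\alpha(s,n)}$. Balancing this against the nonlocal weight $r^{p\delta - n}$ from the fractional Poincar\'e step, applying H\"older with exponent $q/p$, and summing over Whitney balls with the a priori bound $|\Omega| < \infty$ produces precisely the range
\[
q < \frac{np}{s(n - p\delta) + (s-1)(p-1)}.
\]
I expect the chain construction and the local fractional Poincar\'e to be essentially standard ingredients (the gradient analogues are in~\cite{hk98, km00, kot02}); what requires genuine care, and is specific to the fractional setting, is the bookkeeping of the nonlocal weight $r^{p\delta}$ against the $s$-cusp multiplicity. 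This bookkeeping is where the sharp exponent is born, and it is where most of the technical work of the proof lies.
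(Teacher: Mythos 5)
Your overall skeleton --- reducing to the capacity inequality of Theorem~\ref{thm:equivalence of Sob poi via capacity}, chaining along the $s$-John curves, and applying a local fractional Poincar\'e inequality on the chain balls --- is indeed the paper's strategy, but the proposal has two genuine gaps. The first is the telescoping from the pointwise value $u(x)$: with a finite chain whose first ball is centered at $x$ with radius $\simeq d(x,\bdary\Omega)$, the term $|u(x)-u_{B_1^x}|$ admits no pointwise bound in terms of local integrals of $g_u$ for a merely continuous $u$; controlling it would force an infinite shrinking chain and hence a Riesz-potential or maximal-function argument, which (see Remark~\ref{rmk:rmk on the case p=1}) the paper only invokes when $s=1$ or $p=1$. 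The paper avoids this by splitting $A$ into $\mathscr{G}=\{x\in A: u_{B_x}\ge \tfrac12\}$ and $\mathscr{B}=A\setminus\mathscr{G}$, where $B_x=B(x,\tau d(x,\bdary\Omega))$: on $\mathscr{G}$ one chains only between ball averages (from $u_{B_0}=0$ to $u_{B_x}\ge\tfrac12$), while on $\mathscr{B}$ one uses $u\ge 1$ on $A$, $u_{B_x}<\tfrac12$, the local fractional $(q,p)$-Poincar\'e inequality on $B_x$, and a Besicovitch covering. This decomposition is absent from your argument, and without it your first telescoping term is uncontrolled.

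The second and more serious gap is that the exponent bookkeeping --- the entire content of the theorem --- is deferred rather than proved. You assert that balancing a cusp multiplicity $r^{\alpha(s,n)}$ against the weight $r^{p\delta-n}$ ``produces precisely the range,'' but the route you sketch (integrate the chain estimate over $x\in A$, swap the sum, and count multiplicity via shadows) is the method the paper uses for the quasihyperbolic case (Theorem~\ref{thm:quasihyperbolic bc case} via Lemma~\ref{lemma:for proof of qhbc}), not for Theorem~\ref{thm:main thm}; whether that route reproduces the sharp $s$-John range, in particular the $(s-1)(p-1)$ term in the denominator, is exactly what would need verification. In the paper this term arises from a very specific mechanism: H\"older along the chain with weights $r_i^{\kappa}$, $\kappa=\frac{(s-1)(p-1)+\Delta}{sp}$, whose summability $\sum_i r_i^{\kappa p/(p-1)}<\infty$ is exactly what condition 6 of Lemma~\ref{lemma:chain} (at most $cr^{(1-s)/s}$ chain balls of radius $>r$) provides; then condition 2 ($d(x,B_i)\lesssim r_i^{1/s}$) is used to regroup the chain sum into dyadic annuli centered at $x$, and a pigeonhole comparison with the convergent series $\sum_l (2^l r_k)^{\Delta}$ extracts, for each $x\in\mathscr{G}$, a single radius $R_x$ with $\int_{\Omega\cap B(x,R_x)}g_u(y)\,dy\gtrsim |B(x,R_x)|^{p/q}$; a Vitali covering together with the superadditivity available since $q\ge p$ then yields $|\mathscr{G}|^{p/q}\lesssim\int_\Omega g_u$. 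Your generic weights $\varepsilon_i$ and the unspecified exponent $\alpha(s,n)$ stand in for precisely the steps where the sharp exponent is generated, so as written the proposal does not constitute a proof of the stated range.
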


%
The range for $q$ in Theorem~\ref{thm:main thm} is essentially sharp as indicated by the following example.
\begin{example}\label{example:sharp}
Given $\tau,\delta\in (0,1)$, $1\leq p<n/\delta$ and $s<\frac{n}{n-p\delta}$, there exists an $s$-John domain $\Omega\subset\bR^n$ such that $\Omega$ does not support any fractional $(q,p)$-Sobolev-Poincar\'e inequality with $q>\frac{np}{s(n-p\delta)+(s-1)(p-1)}$.
\end{example}

Theorem~\ref{thm:main thm} holds for the critical case $q=\frac{np}{s(n-p\delta)+(s-1)(p-1)}$ as well, provided $s=1$ or $p=1$; see Remark~\ref{rmk:rmk on the case p=1}. We conjecture that Theorem~\ref{thm:main thm} holds, under the same assumptions, for the critical case $q=\frac{np}{s(n-p\delta)+(s-1)(p-1)}$.

\begin{theorem}\label{thm:quasihyperbolic bc case}
Let $\Omega\subset\bR^n$, $n\geq 2$, satisfy the quasihyperbolic boundary condition~\eqref{eq:quasihyperbolic boundary condition} for some $\beta\leq 1$. Then $\Omega$ is a fractional $(q,p)$-Sobolev-Poincar\'e domain provided $p\in (\frac{1}{\delta}(n-n\frac{2\beta}{1+\beta}),n)$ and $q\in [p,\frac{2\beta}{1+\beta}\frac{np}{n-p\delta})$.
\end{theorem}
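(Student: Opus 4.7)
My first move is to invoke Theorem~\ref{thm:equivalence of Sob poi via capacity} and reduce the assertion to the capacity inequality
\begin{equation*}
|A|^{p/q}\leq C\int_\Omega g_u(x)\,dx
\end{equation*}
for every measurable $A\subset\Omega$ with $A\cap B_0=\emptyset$ and every continuous $u$ on $\Omega$ with $u|_{B_0}=0$ and $u|_A\geq 1$. Fix a Whitney decomposition $\mathcal{W}=\{Q_j\}$ of $\Omega$ with a distinguished cube $Q_0$ adjacent to the centre of $B_0$. Writing $|A|=\sum_{Q\in\mathcal{W}}|A\cap Q|$ and applying Chebyshev's inequality to bound $|A\cap Q|\leq \int_Q |u|^q\,dx$, the task is reduced to controlling $\int_Q |u|^q$ for each Whitney cube $Q$ that meets $A$.

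The core step is a chain argument. Attach to each $Q\in\mathcal{W}$ a \emph{Whitney chain} $Q=Q^{(0)},Q^{(1)},\dots,Q^{(N(Q))}=Q_0$ following a quasihyperbolic geodesic from the centre of $Q$ to the centre of $B_0$. The quasihyperbolic boundary condition~\eqref{eq:quasihyperbolic boundary condition} is tailor-made to control both the chain length $N(Q)$ and the rate at which the side lengths $\ell(Q^{(j)})$ grow along the chain; the standard conversion (cf.~\cite{km00,kot02}) yields $\ell(Q^{(j)})\gtrsim \ell(Q)^{1/s}$ with $1/s=2\beta/(1+\beta)$, placing us in an $s$-John-type framework. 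Telescoping $u(x)=(u(x)-u_{Q^{(0)}})+\sum_{j=0}^{N(Q)-1}(u_{Q^{(j)}}-u_{Q^{(j+1)}})+u_{Q_0}$, where $u_Q:=\dashint_Q u$, I would bound each successive-cube difference by a local fractional Poincar\'e inequality on $2Q^{(j)}$,
\begin{equation*}
|u_{Q^{(j)}}-u_{Q^{(j+1)}}|\leq C\,\ell(Q^{(j)})^\delta\Bigl(\dashint_{2Q^{(j)}} g_u\Bigr)^{1/p},
\end{equation*}
and handle the endpoint term $|u_{Q_0}|$ by the same device using $u|_{B_0}=0$. The locality of $g_u$ is essential here: since $|x-y|\lesssim d(x,\partial\Omega)$ for $x,y$ in adjacent Whitney cubes, the truncation ball $B(x,\tau d(x,\partial\Omega))$ already contains all relevant $y$, so on $2Q^{(j)}$ the local seminorm $g_u$ agrees with the full fractional energy.

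Raising to the $q$-th power, summing over $Q\in\mathcal{W}$, interchanging the order of summation so that each cube $Q^{(j)}$ collects the combined weight of all chains passing through it, and applying a discrete Hardy-type inequality along the tree of Whitney chains in the spirit of \cite{hk98,kot02}, I would produce a geometric series in $\ell(Q^{(j)})$. The QHBC-driven growth $\ell(Q^{(j)})\gtrsim \ell(Q)^{2\beta/(1+\beta)}$ renders this series convergent precisely when $p>\tfrac{1}{\delta}\bigl(n-n\tfrac{2\beta}{1+\beta}\bigr)$, and a concluding H\"older step against $\|g_u\|_{L^1(\Omega)}^{q/p}$ succeeds exactly when $q<\tfrac{2\beta}{1+\beta}\cdot\tfrac{np}{n-p\delta}$, yielding the announced ranges.

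The principal technical obstacle is the local fractional Poincar\'e estimate on pairs of adjacent Whitney cubes using only the \emph{truncated} seminorm $g_u$ rather than the full double integral over $\Omega\times\Omega$; this locality is what separates the present fractional argument from the classical first-order proofs in \cite{km00,kot02,ss90}, and it dictates the choice of the dilation factor in the Whitney decomposition so that every pair of neighbouring cubes lies well inside the truncation ball of $g_u$.
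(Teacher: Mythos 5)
Your opening reduction via Theorem~\ref{thm:equivalence of Sob poi via capacity} matches the paper, but the very next step gives away everything the reduction buys. By applying Chebyshev to write $|A\cap Q|\leq\int_Q|u|^q$ and then summing, you have committed yourself to proving $\int_\Omega|u|^q\lesssim\bigl(\int_\Omega g_u\bigr)^{q/p}$ for all admissible $u$ vanishing on $B_0$ --- i.e.\ essentially the full strong inequality --- so the capacity formulation becomes circular rather than a genuine simplification. The point of the Maz'ya-type reduction is that one only needs a level-set estimate, and the paper exploits this through a good/bad decomposition of $A$: on the bad set (no Whitney cube containing the point has average $\geq 1/2$) a single-cube fractional $(p^*,p)$-Poincar\'e inequality suffices, because the oscillation of $u$ inside that one cube is already at least $1/2$; on the good set the chaining argument produces the estimate \eqref{eq:36}, whose left-hand side is the \emph{constant} $1$, not $|u(x)|$. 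It is precisely this replacement of $|u(x)|^q$ by $1$ that lets the subsequent summation close in the sharp range. Your "discrete Hardy-type inequality along the tree of Whitney chains" is exactly the step where the sharp exponents must be extracted, and you do not supply it; the paper's substitute is concrete: integrate \eqref{eq:36} over $\mathscr{G}$, interchange the order of summation so that each cube $Q$ carries the weight $|S(Q)\cap\mathscr{G}|$ of its shadow, apply H\"older, and invoke Lemma~\ref{lemma:for proof of qhbc}, which rests on the shadow volume bound \eqref{eq:29} and the uniform chain summability \eqref{eq:27} with $\varepsilon=(\tfrac{2\beta}{(1+\beta)q}-\tfrac{1}{p^*})\tfrac{p}{p-1}>0$ --- this is where both hypotheses $p>\tfrac{1}{\delta}(n-n\tfrac{2\beta}{1+\beta})$ and $q<\tfrac{2\beta}{1+\beta}\tfrac{np}{n-p\delta}$ actually enter.

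A second, more local error: the quantitative consequence of the quasihyperbolic boundary condition that you cite, $\ell(Q^{(j)})\gtrsim\ell(Q)^{2\beta/(1+\beta)}$ along the chain, cannot hold with uniform constants. Take $Q^{(0)}=Q$: for $\beta<1$ the exponent $\tfrac{2\beta}{1+\beta}<1$, so the claimed bound would force $\ell(Q)^{(1-\beta)/(1+\beta)}\gtrsim 1$, which fails as $\ell(Q)\to 0$. What the condition actually yields, via Lemma~\ref{lemma:bound diam of shadow}, is $\diam S(Q')\lesssim(\diam Q')^{2\beta/(1+\beta)}$, hence the reverse-exponent statement $\ell(Q')\gtrsim\ell(Q)^{(1+\beta)/(2\beta)}$ for $Q'\in P(Q)$; and even this pointwise size comparison is not what the proof uses --- it is the shadow \emph{volume} estimate together with the $\varepsilon$-summability of Lemma~\ref{lemma:key lemma}. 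Your remark about the locality of $g_u$ on adjacent Whitney cubes (and the need to accommodate the truncation parameter $\tau$) is legitimate but minor; the paper handles the analogous issue in the $s$-John case by choosing chains with $M>2/\tau$, and the same care carries over here.
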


\begin{example}\label{example:qhbc}
For each $q>\frac{2\beta}{1+\beta}\frac{np}{n-p\delta}$, there exists a domain $\Omega\subset\bR^n$, $n\geq 2$, satisfying~\eqref{eq:quasihyperbolic boundary condition} which is not a fractional $(q,p)$-Sobolev-Poincar\'e domain. For each $1\leq p<\frac{1}{\delta}(n-n\frac{2\beta}{1+\beta})$, there exist domains $\Omega\subset \bR^n$, $n\geq 2$, satisfying~\eqref{eq:quasihyperbolic boundary condition} which is not a fractional $(p,p)$-Sobolev-Poincar\'e domain.
\end{example}

Recall that we say a domain $\Omega\subset \bR^n$ with a distinguished point $x_0$ has a separation property if there exists a constant $C_0$ such that the following property holds: for every $x\in \Omega$, there exists a curve $\gamma:[0,1]\to \Omega$ with $\gamma(0)=x$, $\gamma(1)=x_0$, and such that for each $t$ either 
\begin{align*}
\gamma([0,t])\subset B_t:=B(\gamma(t),C_0d(\gamma(t),\bdary\Omega))
\end{align*}
or each $y\in \gamma([0,t])\backslash B_t$ and $x_0$ belong to different components of $\Omega\backslash \bdary B_t$.

\begin{theorem}\label{thm:necessarility of s John}
Assume that $\Omega\subset \bR^n$ is a domain of finite Lebesgue measure that  satisfies the separation property with a distinguished point $x_0$. Let $1\leq p<\frac{n}{\delta}$. If $\Omega$ is a fractional $(q,p)$-Sobolev-Poincar\'e domain with $\tau=1$ for some $q>p$, 
then for each $x\in \Omega$, there is curve $\gamma:[0,1]\to \Omega$ with $\gamma(0)=x$, $\gamma(1)=x_0$ such that
\begin{align}\label{eq:s diam curve}
\diam \gamma([0,t])\leq C\varphi(d(\gamma(t),\bdary\Omega)),
\end{align}
where $\varphi(t)=t^{\frac{(n-p\delta)q}{p\delta}(\frac{1}{p}-\frac{1}{q})}$.
\end{theorem}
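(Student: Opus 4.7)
The plan is to adapt the classical argument of Buckley and Koskela~\cite{bk95} (for the integer-order case) to the fractional setting, using the capacity characterization of Theorem~\ref{thm:equivalence of Sob poi via capacity} as the core tool.

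First, I would fix $x\in\Omega$ and invoke the separation property to obtain a curve $\gamma\colon[0,1]\to\Omega$ joining $x$ to $x_0$. For each $t\in(0,1)$, write $y=\gamma(t)$, $r_t=d(y,\bdary\Omega)$, and $B_t=B(y,C_0 r_t)$. In the trivial case $\gamma([0,t])\subset B_t$ one has $\diam\gamma([0,t])\leq 2C_0 r_t$; since then $r_t\geq (1+C_0)^{-1}d(x,\bdary\Omega)$, the bound~\eqref{eq:s diam curve} holds with a constant depending only on $x$. In the remaining non-trivial case the separation property places the portion $\gamma([0,t])\setminus B_t$ and the point $x_0$ in different components of $\Omega\setminus\bdary B_t$.

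In this non-trivial case let $A$ denote the union of those components of $\Omega\setminus\bdary B_t$ that intersect $\gamma([0,t])\setminus B_t$, and choose a ball $B_0\subset\Omega$ centered near $x_0$ small enough that $A\cap B_0=\emptyset$ for every $t$ bounded away from $1$. I would then construct a test function $u\in C(\Omega)$ by Lipschitz smoothing of $\chi_A$: set $u\equiv 1$ on $A$, $u\equiv 0$ outside $A\cup B_t$, and interpolate linearly across $B_t\cap\Omega$ so that $u$ is Lipschitz with constant of order $1/(C_0 r_t)$. In particular $u|_A\geq 1$ and $u|_{B_0}=0$, so Theorem~\ref{thm:equivalence of Sob poi via capacity} applies.

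The central quantitative step is $\int_\Omega g_u(z)\,dz\leq C r_t^{n-p\delta}$. Only those $z$ whose ball $B(z,\tau d(z,\bdary\Omega))$ meets the transition layer of $u$ contribute; for such $z$ the Lipschitz bound together with integrability of $|z-w|^{p-n-p\delta}$ (which requires $\delta<1$) yields $g_u(z)\leq C r_t^{-p}d(z,\bdary\Omega)^{p-p\delta}$. Since the relevant region has volume $O(r_t^n)$ and $d(\cdot,\bdary\Omega)\approx r_t$ there, integration produces the claim, and Theorem~\ref{thm:equivalence of Sob poi via capacity} gives $|A|\leq C r_t^{(n-p\delta)q/p}$.

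The main obstacle is to convert this upper bound on $|A|$ into the diameter estimate~\eqref{eq:s diam curve}. Since $A$ contains the subarc of $\gamma$ lying outside $B_t$, whose diameter differs from $\diam\gamma([0,t])$ by at most $2C_0 r_t$, what is needed is a lower bound of the form $|A|\gtrsim(\diam\gamma([0,t]))^{pq\delta/(q-p)}$. I would attempt to obtain this via a Whitney-type packing of disjoint balls inside $A$ along this subarc, combined with an iterative application of the capacity inequality at a geometrically decreasing sequence of scales $r_{t_i}$ along the curve. The factor $1/\delta$ in the exponent of $\varphi$ has no classical analogue and should emerge from carefully balancing the fractional gradient estimates across these multiple scales; this balancing is what I expect to be the most delicate point.
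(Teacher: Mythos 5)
Your first half is on the right track: estimating $\int_\Omega g_u$ for a Lipschitz cut-off across $B_t$ and invoking Theorem~\ref{thm:equivalence of Sob poi via capacity} does give the measure bound $|A|\lesssim r_t^{(n-p\delta)q/p}$, which is (a version of) the second estimate in Proposition~\ref{prop:for necessarility of s John}. But the step you yourself flag as ``the most delicate point'' is exactly where the proof lives, and your plan for it does not work. A lower bound of the form $|A|\gtrsim(\diam\gamma([0,t]))^{pq\delta/(q-p)}$ cannot be obtained by a Whitney-type packing of balls along the subarc: the components of $\Omega\setminus\bdary B_t$ containing $\gamma([0,t])\setminus B_t$ may be arbitrarily thin tubes hugging $\bdary\Omega$, so their measure gives no control whatsoever on their diameter, and the radii of any disjoint balls centered on the curve are only comparable to $d(\cdot,\bdary\Omega)$, which can be negligible relative to $\diam\gamma([0,t])$. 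The correct mechanism is a \emph{second} application of the fractional $(q,p)$-Sobolev--Poincar\'e inequality, this time to a distance-type (essentially $1$-Lipschitz) test function supported on the union $T$ of the far components: its fractional energy $\int g_u$ is controlled by $|T|$ while the left-hand side detects $\diam(T)$, and this is what produces the estimate $\diam(T)\leq C\bigl(d+|T|^{(\frac{1}{p}-\frac{1}{q})\frac{1}{\delta}}\bigr)$ --- the source of the $1/\delta$ in $\varphi$. This is precisely the content of Proposition~\ref{prop:for necessarility of s John} (quoted from Dyda--Ihnatsyeva--V\"ah\"akangas), on which the paper's proof rests; in your proposal this estimate is neither proved nor replaced by a workable substitute, so the argument has a genuine gap.

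Two smaller points. First, the paper's proof also handles the case where the component of $\Omega\setminus\bdary B_t$ containing $x_0$ fails to contain a fixed ball $B(x_0,\delta(1)/2)$: then $B_t$ must be large and one passes to $B'=4B_t$ before applying Proposition~\ref{prop:for necessarility of s John}. Your ``choose $B_0$ small enough that $A\cap B_0=\emptyset$ for every $t$ bounded away from $1$'' glosses over exactly this configuration, and it is needed to run the argument for all $t$. Second, in the trivial case $\gamma([0,t])\subset B_t$ you should get a constant independent of $x$ (using that $d(\gamma(t),\bdary\Omega)$ is bounded above by a quantity depending only on $\Omega$ and that the exponent in $\varphi$ is at most $1$ in the admissible range of $q$), not one ``depending only on $x$'', since otherwise the conclusion loses its uniform John-type meaning.
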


The assumptions in Theorem~\ref{thm:necessarility of s John} can be further relaxed. Indeed, Theorem~\ref{thm:necessarility of s John} holds if we only assume that the fractional $(q,p)$-Sobolev-Poincar\'e inequality~\eqref{eq:fractional Sob Poin} holds for all locally Lipschitz continuous functions in $\Omega$; see Remark~\ref{rmk:on the function spaces}.

Since the paper generalizes the main results of~\cite{hk98,kot02,div14,bk95} to the fractional setting in a natural way, some of the arguments used in this paper are similar to ones in those papers. In particular, we benefit a lot from~\cite{hk98} and~\cite{kot02}. This paper is organized as follows. Section 2 contains the basic definitions and
Section 3 some auxiliary results. We prove our main results, namely Theorem~\ref{thm:equivalence of Sob poi via capacity}, Theorem~\ref{thm:main thm} and Example~\ref{example:sharp}, in Section 4.
In Section 5, we prove Theorem~\ref{thm:quasihyperbolic bc case} and give the construction of Example~\ref{example:qhbc}. In the final section, Section 6, we discuss the proof of Theorem~\ref{thm:necessarility of s John} and point out a mistake in~\cite{bk95}.

\section{Notations and definitions}

Recall that the quasihyperbolic metric $k_\Omega$ in a domain $\Omega\subsetneq\bR^n$ is defined to be
\begin{equation*}
    k_\Omega(x,y)=\inf_\gamma k_\Omega-\text{length}(\gamma),
\end{equation*}
where the infimum is taken over all rectifiable curves $\gamma$ in $\Omega$ which join $x$ to $y$ and
\begin{equation*}
    k_\Omega-\text{length}(\gamma)=\int_\gamma \frac{ds}{d(x,\bdary\Omega)}
\end{equation*}
denotes the quasihyperbolic length of $\gamma$ in $\Omega$. This metric was introduced by Gehring and Palka in~\cite{gp76}. A curve $\gamma$ joining $x$ to $y$ for which $k_\Omega$-length$(\gamma)=k_\Omega(x,y)$ is called a quasihyperbolic geodesic. Quasihyperbolic geodesics joining any two points of a proper subdomain of $\bR^n$ always exists; see~\cite[Lemma 1]{go79}. 

Let $\Omega$ be a bounded domain in $\bR^n$, $n\geq 2$. Then $\mathcal{W}=\mathcal{W}(\Omega)$ denotes a Whitney decomposition of $\Omega$, i.e. a collection of closed cubes $Q\subset\Omega$ with pairwise disjoint interiors and having edges parallel to the coordinate axes, such that $\Omega=\cup_{Q\in \mathcal{W}}Q$, the diameters of $Q\in \mathcal{W}$ belong to the set $\{2^{-j}:j\in \mathbb{Z}\}$ and satisfy the condition
\begin{equation*}
\diam(Q)\leq \dist(Q,\bdary\Omega)\leq 4\diam(Q).
\end{equation*}
For $j\in\mathbb{Z}$ we define
\begin{equation*}
\mathcal{W}_j=\{Q\in \mathbb{W}:\diam(Q)=2^{-j}\}.
\end{equation*}

Note that when we write $f(x)\lesssim g(x)$, we mean that $f(x)\leq Cg(x)$ is satisfied for all $x$ with some fixed constant $C\geq 1$. Similarly, the expression $f(x)\gtrsim g(x)$ means that $f(x)\geq C^{-1}g(x)$ is satisfied for all $x$ with some fixed constant $C\geq 1$. We write $f(x)\approx g(x)$ whenever $f(x)\lesssim g(x)$ and $f(x)\gtrsim g(x)$.
\section{Auxiliary results}\label{sec:auxiliary results}

We need the following ``chain lemma" from~\cite[Proof of Theorem 9]{hak98}.

\begin{lemma}\label{lemma:chain}
Let $\Omega\subset \bR^n$ be an $s$-John domain and $M>1$ a fixed constant. Let $B_0=B(x_0,\frac{d(x_0,\bdary\Omega)}{4M})$, where $x_0\in \Omega$ is the John center. There exists a constant $c>0$, depending only on $\Omega$, $M$ and $n$, such that given $x\in \Omega$, there exists a finite ``chain" of balls $B_i=B(x_i,r_i)$, $i=0,1,\cdots,k$ ($k$ depends on the choice of $x$) that joins $x_0$ to $x$ with the following properties:

\begin{itemize}
 \item[1.] $|B_i\cup B_{i+1}|\leq c|B_i\cap B_{i+1}|$;
 \item[2.] $d(x,B_i)\leq cr_i^{1/s}$;
 \item[3.] $d(B_i,\bdary\Omega)\geq Mr_i$;
 \item[4.] $\sum_{i=0}^k\chi_{B_i}\leq c\chi_\Omega$;
 \item[5.] $|x-x_i|\leq cr_i^{1/s}$ and $B_k=B(x,\frac{d(x,\bdary\Omega)}{4M})$;
 \item[6.] For any $r>0$, the number of balls $B_i$ with radius $r_i>r$ is less than $cr^{(1-s)/s}$ when $s>1$.
\end{itemize}
\end{lemma}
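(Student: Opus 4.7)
The plan is to construct the chain explicitly along an $s$-John curve joining $x$ to $x_0$. First I would fix a rectifiable curve $\gamma$ realizing the $s$-John condition, parametrized by arc length as $\gamma\colon[0,L]\to\Omega$ with $\gamma(0)=x$ and $\gamma(L)=x_0$; the John inequality then reads $d(\gamma(t),\bdary\Omega)\geq t^s/C_J$ for every $t\in(0,L]$, where $C_J$ is the $s$-John constant of $\Omega$.

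Next I would select the centers $y_i$ by a greedy procedure walking along $\gamma$ from $x_0$ toward $x$. Set $y_0=x_0$ with $r_0=d(x_0,\bdary\Omega)/(4M)$; inductively, having chosen $y_i=\gamma(t_i)$ with $r_i=d(y_i,\bdary\Omega)/(4M)$, let $y_{i+1}=\gamma(t_{i+1})$ be the next point closer to $x$ with $t_i-t_{i+1}$ equal to a suitably small fixed multiple of $r_i$; terminate once $x\in B(y_i,r_i)$ and declare $y_k=x$, $r_k=d(x,\bdary\Omega)/(4M)$. The desired chain is then $B_i=B(y_i,r_i)$ for $i=0,\ldots,k$.

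Properties (1)--(5) then follow readily from the construction. Property~(3) is immediate from the normalisation $r_i=d(y_i,\bdary\Omega)/(4M)$ and $M>1$. Property~(1) relies on the $1$-Lipschitz continuity of $d(\cdot,\bdary\Omega)$, which together with $|y_i-y_{i+1}|$ being a small fraction of $r_i$ forces $r_i\approx r_{i+1}$ and an intersection of measure comparable to $|B_i|$. The $s$-John condition gives $t_i\leq (4MC_J r_i)^{1/s}$, and since $|x-y_i|\leq t_i$ we obtain property~(5) (the identification $B_k=B(x,d(x,\bdary\Omega)/(4M))$ being built into the construction) and property~(2) via $d(x,B_i)\leq |x-y_i|$. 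Property~(4) is a standard bounded overlap consequence: any $z\in B_i$ satisfies $r_i\approx d(z,\bdary\Omega)$ by the Lipschitz continuity of the distance function, so the admissible centers cluster inside a Whitney-sized neighbourhood of $z$ with radii of a fixed dyadic scale, and the greedy separation then bounds their number by a standard volume argument.

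The main obstacle is property~(6). My plan there is a dyadic analysis of the index set. For $\ell\in\mathbb{Z}$ let $I_\ell=\{i:r_i\in[2^{-\ell-1},2^{-\ell})\}$. For each $i\in I_\ell$ the John inequality bounds $t_i\leq(4MC_J\cdot 2^{-\ell})^{1/s}=:c_1 2^{-\ell/s}$, while by construction every gap satisfies $t_i-t_{i+1}\gtrsim r_i\geq 2^{-\ell-1}$. The intervals $[t_{i+1},t_i]$ for $i\in I_\ell$ are pairwise disjoint and all contained in $[0,c_1 2^{-\ell/s}]$, so summing their lengths gives $\#I_\ell\cdot 2^{-\ell-1}\lesssim 2^{-\ell/s}$, whence $\#I_\ell\lesssim 2^{\ell(s-1)/s}$. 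Summing over all $\ell$ with $2^{-\ell}>r$ produces a geometric series (with ratio $2^{(s-1)/s}>1$ for $s>1$) dominated by its largest term, namely a constant multiple of $r^{(1-s)/s}$, which is exactly the claimed bound.
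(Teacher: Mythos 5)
Your overall strategy---chaining along an $s$-John curve with the normalisation $r_i=d(y_i,\bdary\Omega)/(4M)$---is the natural one; the paper itself does not prove the lemma but cites the chain construction from the proof of Theorem~9 in~\cite{hak98}, which proceeds along similar lines. Your treatment of properties 1, 2, 3, 5 and 6 is essentially correct (the interval-counting argument for 6 is exactly right; only minor details are missing, e.g.\ at the final junction $|x-y_{k-1}|$ is merely smaller than $r_{k-1}$ rather than a small fraction of it, but since $r_k$ and $r_{k-1}$ are comparable, property 1 still holds there).

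The genuine gap is property 4. Your greedy step separates consecutive centers only in the arclength parameter, $t_i-t_{i+1}=\epsilon r_i$, and this gives no spatial separation between non-consecutive centers, which is what the ``standard volume argument'' actually needs. For $s>1$ the condition~\eqref{toka} permits the chosen curve to spend arclength of order $d^{1/s}\gg d$ inside a ball of radius comparable to $d$: at points whose distance to $\bdary\Omega$ is about $d$, the admissible parameter range is only constrained to lie in $[0,(Cd)^{1/s}]$. Since the $s$-John condition merely asserts the existence of some admissible curve and you take an arbitrary one, the curve may wiggle inside $B(z,d(z)/(100M))$ for that much arclength; your procedure then places roughly $\epsilon^{-1}d(z)^{(1-s)/s}$ centers in that small ball, and every corresponding chain ball contains $z$ because its radius is comparable to $d(z)/(4M)$. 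Thus the overlap at $z$ can be as large as property 6 allows, i.e.\ unbounded as $d(z)\to 0$, so property 4 fails for the construction as written. The standard remedy is to choose $t_{i+1}$ not by a fixed arclength step but as the last exit time from half of the current ball, i.e.\ the smallest $t$ with $\gamma(t)\in\overline{B(y_i,r_i/2)}$: consecutive balls still intersect in a set of measure comparable to their union, while any later center lies outside $B(y_i,r_i/2)$, so all centers of balls containing a fixed point $z$ (radii and mutual distances all comparable to $d(z)/(4M)$) are bounded in number by a packing argument; and since the arclength gaps are still at least $r_i/2$, your counting argument for property 6 goes through unchanged.
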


Recall that for a function $f$, the Riesz potential $I_\delta$, $\delta\in (0,n)$, of $f$ is defined by
\begin{align}\label{eq:definition of Riesz potential}
I_\delta(f)=\int_{\bR^n}\frac{f(y)}{|x-y|^{n-\delta}}dy.
\end{align}

The following estimate for Riesz potential is well-known; see for instance~\cite[Theorem 3.1.4 and Corollary 3.1.5]{ah96}.
\begin{theorem}\label{thm:Riesz potential estimate}
Let $0<\delta<n$, $1<p<q<\infty$, and $1/p-1/q=\delta/n$. Then $\|I_\delta(f)\|_q\leq c\|f\|_p$ for some constant $c$ independent of $f\in L^p(\bR^n)$. Moreover, there is a constant $c_1=c(n,\delta)>0$ such that the weak estimate
\begin{align}\label{eq:weak type estimate for Riesz}
\sup_{t>0}|\{x\in \bR^n:|I_\delta(f)(x)|>t\}|t^{n/(n-\delta)}\leq c_1\|f\|_1^{n/(n-\delta)}
\end{align}
holds for every $f\in L^1(\bR^n)$.
\end{theorem}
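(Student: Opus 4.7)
The plan is to prove both assertions by the classical Hedberg splitting technique, which controls $I_\delta f$ pointwise by a product of a power of the Hardy--Littlewood maximal function $Mf$ and a power of an $L^p$-norm of $f$. Once this pointwise inequality is in hand, the strong estimate for $p>1$ follows from the strong $(p,p)$ bound for $M$, and the weak endpoint at $p=1$ follows from the weak $(1,1)$ bound for $M$.

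First I would fix $x\in\bR^n$ and a parameter $R>0$ to be chosen and split
\[I_\delta f(x)=\int_{|x-y|<R}\frac{f(y)}{|x-y|^{n-\delta}}\,dy+\int_{|x-y|\ge R}\frac{f(y)}{|x-y|^{n-\delta}}\,dy.\]
The local part is handled by dyadic decomposition into annuli $\{2^{-k-1}R<|x-y|\le 2^{-k}R\}$: on each annulus the kernel is comparable to $(2^{-k}R)^{-(n-\delta)}$, and summing the resulting geometric series yields a bound of the form $CR^\delta Mf(x)$. The tail is treated by H\"older's inequality with conjugate exponent $p'$, producing $CR^{\delta-n/p}\|f\|_p$; this uses the convergence condition $(n-\delta)p'>n$, equivalent to $p<n/\delta$, which is forced by the scaling assumption together with $q<\infty$.

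The next step is the Hedberg balancing: choose $R^{n/p}=\|f\|_p/Mf(x)$, which makes the two bounds coincide and delivers the pointwise inequality
\[|I_\delta f(x)|\lesssim \|f\|_p^{\delta p/n}\,Mf(x)^{1-\delta p/n}.\]
The exponent identity $1-\delta p/n=p/q$ is an immediate consequence of $1/p-1/q=\delta/n$, so raising to the $q$-th power and integrating reduces the strong estimate to $\int (Mf)^p\lesssim \|f\|_p^p$, which is the Hardy--Littlewood maximal theorem for $p>1$. For the weak endpoint $p=1$ the same pointwise inequality becomes $|I_\delta f|\lesssim \|f\|_1^{\delta/n}(Mf)^{(n-\delta)/n}$; comparing level sets and invoking the weak $(1,1)$ inequality for $M$ then yields exactly the claimed distributional estimate.

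No step presents a genuine obstacle: the content is all encoded in Hedberg's choice of $R$, and the remaining work is bookkeeping of exponents, which closes up automatically because of the critical scaling identity $1/p-1/q=\delta/n$. The ingredients --- dyadic annular splitting, H\"older's inequality, and the Hardy--Littlewood maximal theorem in its strong and weak forms --- are entirely standard, which is why this result appears in textbook form in \cite{ah96}.
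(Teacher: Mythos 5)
Your Hedberg-splitting argument is correct: the annular estimate for the local part, the H\"older/tail estimate (with the convergence condition $p<n/\delta$ guaranteed by the scaling relation and $q<\infty$), the balancing choice of $R$, and the exponent bookkeeping all check out, and the $p=1$ endpoint follows correctly from the weak $(1,1)$ maximal inequality. The paper itself offers no proof of this statement---it simply cites \cite[Theorem 3.1.4 and Corollary 3.1.5]{ah96}---and your argument is essentially the standard one given in that reference, so there is nothing further to reconcile.
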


The following proposition, which can regarded as a fractional analogy of~\cite[Theorem 2.1]{bk95},  is proved in~\cite[Proposition 6.2]{div14}.
\begin{proposition}\label{prop:for necessarility of s John}
Suppose that $\Omega\subset \bR^n$ is a domain of finite Lebesgue measure. Let $1\leq p<q<\infty$. Assume that the fractional $(q,p)$-Sobolev-Poincar\'e inequality~\eqref{eq:fractional Sob Poin} holds with $\tau=1$ for every $u\in C(\Omega)$. Fix a ball $B_0\subset\Omega$, and let $d>0$ and $w\in \Omega$. Then there exists a constant $C>0$ such that
\begin{align*}
\diam(T)\leq C\Big(d+|T|^{(\frac{1}{p}-\frac{1}{q})\frac{1}{\delta}}\Big)
\end{align*}
and
\begin{align*}
|T|^{1/n}\leq C(d+d^{(n-p\delta)q/(np)})
\end{align*}
if $T$ is the union of all components of $\Omega\backslash B(w,d)$ that do not intersect the ball $B_0$. The constant $C$ depends only on $|B_0|,|\Omega|,n,p,q,\delta$ and the constant associated to the fractional $(q,p)$-Sobolev-Poincar\'e inequality.
\end{proposition}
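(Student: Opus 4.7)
The plan is to test the fractional Sobolev--Poincar\'e inequality (equivalently, the capacity formulation of Theorem~\ref{thm:equivalence of Sob poi via capacity}) against carefully chosen cut-off functions built around the bottleneck $B(w,d)$ that separates $T$ from $B_0$. For each of the two target inequalities, I would select a continuous function $u$ on $\Omega$ with $u|_{B_0}=0$ and $u|_A\ge 1$ on a large measurable subset $A\subset T$, and then estimate the fractional energy $\int_{\Omega}g_u\,dx$ from above.

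For the diameter inequality, assuming $\diam(T)>4d$ (the opposite case being trivial), I would pick $y^{\ast}\in T$ with $R:=|y^{\ast}-w|\approx\diam(T)$ and test against
$$u(x)=\chi_T(x)\,\min\bigl\{1,(|x-w|-d)_{+}/R\bigr\}.$$
Since $T\cap\overline{B(w,d)}=\emptyset$ and $\partial T\cap\Omega\subset\partial B(w,d)$, this function is continuous on $\Omega$, vanishes on $B_0$, and equals $1$ on a subset $A\subset T$ of measure comparable to $|T|$. Combining the Lipschitz bound $|u(x)-u(y)|\le|x-y|/R$ with the trivial bound $|u(x)-u(y)|\le 1$, a standard estimate of the form $\int_{B(x,r)}|x-y|^{p-n-p\delta}\,dy\lesssim r^{p(1-\delta)}$ would yield $\int_{\Omega}g_u\,dx\lesssim R^{-p\delta}|T|$; matching against the capacity lower bound $|A|^{p/q}\gtrsim|T|^{p/q}$ and rearranging then produces $\diam(T)\lesssim|T|^{(1/p-1/q)/\delta}$, which is exactly the nontrivial term of the first inequality.

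For the volume inequality, I would instead spread the transition of the cut-off over a shell of width $d$, for example
$$v(x)=\chi_T(x)\,\min\bigl\{1,\dist(x,\partial B(w,d))/d\bigr\}.$$
This $v$ equals $1$ on $T\setminus B(w,2d)$, decays linearly to $0$ on $T\cap B(w,2d)$, and is again continuous on $\Omega$ with $v|_{B_0}=0$. Estimating the fractional energy of $v$ by separately handling the contribution from the transition shell (of volume $\lesssim d^n$ and Lipschitz constant $1/d$), and using the locality constraint $|x-y|\le d(x,\bdary\Omega)$, should lead to $\int_{\Omega}g_v\,dx\lesssim d^{n-p\delta}$ (up to lower-order terms coming from $T\cap B(w,2d)$). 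Matching this against $|T|^{p/q}$ and rearranging then yields the second inequality.

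The main obstacle will be the careful treatment of the nonlocal fractional seminorm under the locality constraint $|x-y|\le d(x,\bdary\Omega)$. Distinct components of $\Omega\setminus B(w,d)$ may approach each other along $\partial B(w,d)$ and lie close in Euclidean distance while being separated inside $\Omega$, so pairs $(x,y)$ in different components but with $|x-y|$ small could in principle inflate $\int g_u$. The point is that the locality constraint forces such pairs into a thin tubular neighbourhood of $\partial B(w,d)\cap\Omega$, where the Whitney-type geometry of the shell makes the energy estimate tractable; once this is checked, the two inequalities follow by optimizing the scale of the cut-offs against the capacity formulation.
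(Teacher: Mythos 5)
The paper itself does not prove this proposition; it quotes it from~\cite[Proposition 6.2]{div14}, whose argument follows the scheme of~\cite[Theorem 2.1]{bk95}. Measured against that scheme, your treatment of the \emph{second} (volume) inequality is essentially the standard one and is sound in outline: one tests with a cut-off at scale $d$ around $B(w,d)$ restricted to $T$, and the key point you flag is handled by the observation that if $x,y$ lie in different components of $\Omega\setminus \overline{B(w,d)}$ and $|x-y|<d(x,\bdary\Omega)$, then the segment $[x,y]$ stays in $\Omega$ and must cross $\bdary B(w,d)$, so $|x-y|\geq \dist(x,\bdary B(w,d))$ and $y$ lies in the shadow of the ball seen from $x$; together with the reduction to $d(w,\bdary\Omega)\leq 2d$ (if $B(w,2d)\subset\Omega$ then $\Omega\setminus\overline{B(w,d)}$ is connected and $T$ is degenerate) this gives $\int_\Omega g_v\lesssim d^{n-p\delta}$. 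Note, however, that the problematic pairs are \emph{not} both in a thin tubular neighbourhood of $\bdary B(w,d)$: the point $x$ can be arbitrarily far away, and the estimate needs the crossing/solid-angle count just described rather than locality of both points.

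The genuine gap is in the first (diameter) inequality. With $u=\chi_T\min\{1,(|x-w|-d)_+/R\}$ and $R\approx\diam T$, the claim that $\{u=1\}$ has measure comparable to $|T|$ is false: the mass of $T$ may concentrate near $\bdary B(w,d)$. Take, for instance, $\Omega$ a unit ball with an exponentially narrowing horn of length $L$ attached at $w$ (cross-sectional radius $e^{-t}$ at depth $t$), so that $|T|\approx 1$ while $\diam T\approx L$; then $|\{u\geq 1\}|\approx e^{-(n-1)cL}$, and the inequality your argument produces, $|\{u\geq1\}|^{p/q}\lesssim R^{-p\delta}|T|$, is satisfied for every $L$ and yields no bound on $\diam T$. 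In fact no single global ramp can prove the diameter bound: in the horn example the ratio of the two sides of~\eqref{eq:fractional Sob Poin} for such a ramp stays bounded as $L\to\infty$, even though the conclusion of the proposition forces the Sobolev--Poincar\'e constant to blow up with $L$ (the blow-up is only detected by cut-offs localized near the tip). What the proofs in~\cite{bk95,div14} actually do is iterate over scales: setting $T_t=\{x\in T:\,|x-w|-d>t\}$ and testing with $v_{t,s}=\chi_T\min\{1,((|x-w|-d)-t)_+/s\}$, whose energy is $\lesssim s^{-p\delta}|T_t|$ by the same locality lemma as above, one gets at each step either $|T_{t+s}|^{p/q}\leq Cs^{-p\delta}|T_t|$ or (when the mean of $v_{t,s}$ is large) $|B_0|^{p/q}\leq Cs^{-p\delta}|T_t|$, and summing a dyadic family of scales $s$ up to $\approx\diam T$ produces $\diam T\leq C\big(d+|T|^{(\frac1p-\frac1q)\frac1\delta}\big)$. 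Your proposal is missing exactly this multi-scale truncation mechanism, and without it the first inequality does not follow.
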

\begin{remark}\label{rmk:on the function spaces}
As in~\cite{bk95}, one can check that the conclusion holds whenever the fractional $(q,p)$-Sobolev-Poincar\'e inequality~\eqref{eq:fractional Sob Poin} with $\tau=1$ holds for every locally Lipschitz continuous functions; see~\cite[Proof of Proposition 6.2]{div14}. 
\end{remark}

The following lemma is proved in~\cite[Lemma 2.6]{kot02}.
\begin{lemma}\label{lemma:key lemma}
Let $\Omega\subset\bR^n$, $n\geq 2$, be a domain that satisfies the quasihyperbolic boundary condition~~\eqref{eq:quasihyperbolic boundary condition}. Then for each $\varepsilon>0$ there exists a constant $C=C(n,\diam\Omega,\varepsilon)$ such that
\begin{align}\label{eq:27}
\sup_{Q_1\in \mathcal{W}}\sum_{Q\in P(Q_1)}|Q|^\varepsilon\leq C.
\end{align}
\end{lemma}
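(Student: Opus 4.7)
The plan is to exploit the geometric structure of $P(Q_1)$ under the quasihyperbolic boundary condition and reduce the sum to a convergent geometric series over Whitney generations. Following the convention of~\cite{kot02}, $P(Q_1)$ denotes a fixed Whitney chain joining a distinguished base cube $Q_0 \subset \Omega$ to $Q_1$, built along a quasihyperbolic geodesic. First I would record that consecutive cubes of any such chain have mutual quasihyperbolic distance bounded by an absolute constant, so that $\#P(Q_1) \lesssim k_\Omega(x_{Q_0}, x_{Q_1})$; by the quasihyperbolic boundary condition~\eqref{eq:quasihyperbolic boundary condition}, this quantity is in turn controlled by $C(\beta)\log\bigl(1/d(x_{Q_1},\bdary\Omega)\bigr) + O(1)$.

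Next I would stratify the chain by Whitney generation, writing $P(Q_1)=\bigsqcup_j (P(Q_1)\cap \mathcal{W}_j)$, and establish the per-generation packing bound $\#(P(Q_1)\cap\mathcal{W}_j)\leq C(n)$ for every $j$. The key point is that two distinct cubes in $\mathcal{W}_j$ have pairwise disjoint interiors and identical side length $2^{-j}$, while the chain advances a quasihyperbolic distance $\gtrsim 1$ between consecutive cubes, so the number of generation-$j$ cubes traversed by the chain is absorbed into a dimensional constant. Since $|Q|^\varepsilon\approx 2^{-jn\varepsilon}$ for $Q\in \mathcal{W}_j$, this reduces the estimate to
\[
\sum_{Q\in P(Q_1)} |Q|^\varepsilon \;\lesssim\; \sum_{j\geq j_0} 2^{-jn\varepsilon},
\]
where $2^{-j_0}\approx \diam \Omega$; the geometric series is dominated by a constant depending only on $n$, $\varepsilon$ and $\diam\Omega$, yielding the claimed bound uniformly in $Q_1$.

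The main obstacle is precisely the per-generation packing estimate. It is crucial that $P(Q_1)$ is a single chain rather than the entire ``shadow'' of $Q_1$: for a shadow one could at best hope for a generation count of order $2^{j(1-\beta)/\beta}$, which would force $\varepsilon > (1-\beta)/(n\beta)$ for convergence and so contradict the conclusion for all $\varepsilon>0$. Verifying that the chain can be realized along a geodesic so that the generation counts remain uniformly $O(1)$ in $j$ -- which is exactly where the quasihyperbolic boundary condition, in combination with the standard geometry of Whitney cubes, enters decisively -- is the technical heart of the argument.
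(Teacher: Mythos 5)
The approach (stratify the chain by Whitney generation and sum a geometric-type series) is the right one — it is essentially how \cite[Lemma 2.6]{kot02}, which the paper simply cites, proceeds — but your pivotal packing claim $\#\bigl(P(Q_1)\cap\mathcal{W}_j\bigr)\leq C(n)$ is false, and you have not proved it; you only assert that the quasihyperbolic boundary condition ``enters decisively'' there. In fact, your justification (consecutive cubes of the chain are a quasihyperbolic distance $\gtrsim 1$ apart) only bounds the \emph{total} number of cubes by the total quasihyperbolic length; it says nothing about how many of them sit in a fixed generation $j$. A quasihyperbolic geodesic can travel a long way at a fixed scale: in a corridor of width $\approx 2^{-j}$ and length $\approx j\,2^{-j}$ (perfectly compatible with \eqref{eq:quasihyperbolic boundary condition}) the geodesic meets on the order of $j$ cubes of generation $j$, so no bound independent of $j$ can hold. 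A further warning sign is that if your claim were true, the lemma would follow for \emph{every} bounded domain, since the quasihyperbolic boundary condition is never actually used in your final series; but the statement fails without it (fold a corridor of fixed small width and enormous length inside a bounded set: the sum over that single chain of $|Q|^{\varepsilon}$ at that one scale is as large as you like).

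The correct use of \eqref{eq:quasihyperbolic boundary condition} is pointwise along the geodesic: if $Q\in P(Q_1)\cap\mathcal{W}_j$ meets the geodesic $\gamma$ at $x_Q$, then $d(x_Q,\partial\Omega)\approx 2^{-j}$, so $k_\Omega(x_0,x_Q)\leq \frac{1}{\beta}\log\frac{d(x_0,\partial\Omega)}{d(x_Q,\partial\Omega)}+C_0\lesssim 1+\frac{j}{\beta}$. Since subarcs of a quasihyperbolic geodesic are geodesics, all generation-$j$ cubes of the chain meet an initial segment of $\gamma$ of quasihyperbolic length $\lesssim 1+j/\beta$, and a curve of quasihyperbolic length $L$ meets at most $C(n)(L+1)$ Whitney cubes; hence $\#\bigl(P(Q_1)\cap\mathcal{W}_j\bigr)\leq C(n)\bigl(1+j/\beta\bigr)$, linear in $j$ rather than $O(1)$. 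This weaker count still suffices, because $\sum_{j\geq j_0}(1+j/\beta)\,2^{-jn\varepsilon}<\infty$ with $2^{-j_0}\approx\diam\Omega$; note the resulting constant necessarily depends on the quasihyperbolic data $\beta$ and $C_0$ as well, not only on $n$, $\varepsilon$ and $\diam\Omega$. With your per-generation claim replaced by this argument, the proof goes through.
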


Fix a Whitney cube $Q_0$ and assume that $x_0$ is the center of $Q_0$. For each cube $Q\in \mathcal{W}$, we choose a quasihyperbolic geodesic $\gamma$ joining $x_0$ to the center of $Q$ and we let $P(Q)$ denote the collection of all the Whitney cubes $Q'\in \mathcal{W}$ which intersect $\gamma$. Then the shadow $S(Q)$ of the cube $Q$ is defined to be
\begin{align*}
S(Q)=\bigcup_{Q_1\in \mathcal{W},Q\in P(Q_1)}Q_1.
\end{align*}

We need the following estimate of the size of the shadow of a Whitney cube $Q$ in terms of the size of $Q$. The proof is essentially contained in~\cite[Lemma 2.8]{kot02} with minor modifications.
\begin{lemma}\label{lemma:bound diam of shadow}
Let $\Omega\subset\bR^n$, $n\geq 2$, be a domain that satisfies the quasihyperbolic boundary condition~~\eqref{eq:quasihyperbolic boundary condition}. Then there exists a constant $C=C(n)$ such that
\begin{align*}
\diam S(Q)\leq C (\diam Q)^{\frac{2\beta}{1+\beta}}
\end{align*}
for all $Q\in \mathcal{W}$. Consequently,
\begin{align}\label{eq:29}
|S(Q)|\leq C|Q|^{\frac{2\beta}{1+\beta}}.
\end{align}
\end{lemma}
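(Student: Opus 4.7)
My plan is to follow the strategy of~\cite[Lemma 2.8]{kot02}. Fix $Q_1 \in S(Q)$ and set $r = \diam Q$. By definition of $S(Q)$, there is a quasihyperbolic geodesic $\gamma$ from the distinguished center $x_0$ (of $Q_0$) to the center of $Q_1$ that passes through $Q$; let $\gamma_2$ denote the subpath of $\gamma$ from some point $p_Q \in Q$ to the center of $Q_1$.

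The heart of the proof is a pointwise bound on $d(z,\bdary\Omega)$ along $\gamma_2$. I would combine the quasihyperbolic boundary condition~\eqref{eq:quasihyperbolic boundary condition}, which gives
\[k_\Omega(x_0, z) \leq \frac{1}{\beta}\log\frac{1}{d(z, \bdary\Omega)} + C,\]
with the subgeodesic decomposition $k_\Omega(x_0, z) = k_\Omega(x_0, p_Q) + k_\Omega(p_Q, z)$ (valid since $p_Q$ lies on the geodesic $\gamma$ from $x_0$ to $z$), together with the standard quasihyperbolic lower bounds $k_\Omega(x_0, p_Q) \geq \log(1/r) - C$ and $k_\Omega(p_Q, z) \geq \log(d(z,\bdary\Omega)/r)$ (which holds whenever $d(z,\bdary\Omega) \geq r$). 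A short algebraic manipulation then yields
\[d(z, \bdary\Omega) \leq C\, r^{\frac{2\beta}{1+\beta}}\]
for every such $z \in \gamma_2$. The remaining case $d(z,\bdary\Omega) < r$ is automatic because $\beta \leq 1$ forces $r \leq r^{2\beta/(1+\beta)}$ for $r \leq 1$.

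Applying this estimate at the endpoint of $\gamma_2$ immediately gives $\diam Q_1 \leq C r^{2\beta/(1+\beta)}$. To control the Euclidean length of $\gamma_2$, I would employ a Whitney chain argument: every Whitney cube that $\gamma_2$ meets has diameter $\lesssim r^{2\beta/(1+\beta)}$, and the geodesic property of $\gamma$ limits how many cubes of each dyadic size the geodesic can traverse, so a geometric-series estimate produces an Euclidean length bound of order $r^{2\beta/(1+\beta)}$. The triangle inequality then gives $|y - p_Q| \leq C r^{2\beta/(1+\beta)}$ for every $y \in Q_1$, and taking the supremum over $Q_1 \in S(Q)$ yields the desired diameter bound. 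The volume estimate~\eqref{eq:29} follows at once, since $S(Q)$ is contained in a Euclidean ball of radius $C(\diam Q)^{2\beta/(1+\beta)}$, whence $|S(Q)| \leq C (\diam Q)^{n\cdot 2\beta/(1+\beta)} \leq C |Q|^{2\beta/(1+\beta)}$.

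The step I expect to be the main obstacle is the Whitney chain estimate for the Euclidean length of $\gamma_2$: one must ensure that summing the cube diameters along the geodesic does not produce a spurious logarithmic factor, and this requires a careful exploitation of the geodesic property of $\gamma$ (specifically, that the number of Whitney cubes of each dyadic scale crossed by a quasihyperbolic geodesic is under control). This is precisely the place where the minor modification of~\cite[Lemma 2.8]{kot02} referenced in the statement comes in.
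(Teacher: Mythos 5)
Your first half is right, and it is the same route the paper takes (the paper simply defers to \cite[Lemma 2.8]{kot02}): for $z$ on the tail $\gamma_2$, the additivity $k_\Omega(x_0,z)=k_\Omega(x_0,p_Q)+k_\Omega(p_Q,z)$, the lower bounds $k_\Omega(x_0,p_Q)\ge\log\frac{d(x_0,\bdary\Omega)}{Cr}$ and $k_\Omega(p_Q,z)\ge\log\frac{d(z,\bdary\Omega)}{Cr}$ with $r=\diam Q$, and \eqref{eq:quasihyperbolic boundary condition} do give $d(z,\bdary\Omega)\le Cr^{2\beta/(1+\beta)}=:D$ pointwise along $\gamma_2$.

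The gap is exactly at the step you flag and then defer. The mechanism you propose --- ``the number of Whitney cubes of each dyadic scale crossed by a quasihyperbolic geodesic is under control'' --- is not true on the strength of the geodesic property alone (a geodesic running along a tube of constant width meets arbitrarily many cubes of one scale), and even under the quasihyperbolic boundary condition the count per scale is \emph{not} uniformly bounded; it grows with the scale gap, and one must quantify this growth. The estimate that makes the geometric series work is a per-scale $k$-length budget: if $w^{*}$ and $w^{**}$ are the last and first points of $\gamma_2$ with $d(\cdot,\bdary\Omega)>2^{-m-1}$, then, since cumulative $k$-length along $\gamma$ equals $k_\Omega(x_0,\cdot)$,
\begin{align*}
k_\Omega\text{-length}\bigl(\{z\in\gamma_2:\ d(z,\bdary\Omega)>2^{-m-1}\}\bigr)
&\le k_\Omega(x_0,w^{*})-k_\Omega(x_0,w^{**})\\
&\le \frac{1+\beta}{\beta}\,\log\frac{1}{2^{-m}}-2\log\frac1r+C,
\end{align*}
where the upper bound on $k_\Omega(x_0,w^{*})$ comes from \eqref{eq:quasihyperbolic boundary condition} and the lower bound on $k_\Omega(x_0,w^{**})$ must include \emph{both} terms $\log\frac{d(x_0,\bdary\Omega)}{Cr}$ and the ascent cost $\log\frac{2^{-m}}{Cr}$ needed to climb from depth $\approx r$ in $Q$ up to depth $2^{-m}$. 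Writing $2^{-m}=2^{-i}D$, this budget is $\lesssim_\beta i+1$, so the Euclidean length contributed at that depth is $\lesssim 2^{-i}D\,(i+1)$ and summing over $i$ gives $\ell(\gamma_2)\lesssim D$ with no logarithm; from this the diameter and volume bounds follow as you say. If one drops the ascent term and only subtracts $k_\Omega(x_0,p_Q)\ge\log\frac{d(x_0,\bdary\Omega)}{Cr}$ --- which is all your sketch provides --- the budget at the top scale is $\frac{1-\beta}{1+\beta}\log\frac1r$ and one only gets $\diam S(Q)\lesssim (\diam Q)^{2\beta/(1+\beta)}\log\frac{1}{\diam Q}$, i.e.\ precisely the spurious logarithm you were worried about. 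So the decisive idea of \cite[Lemma 2.8]{kot02} is this two-term lower bound in the per-scale budget, and it is missing from your argument. (A small further point: the constant cannot be $C(n)$ alone; it necessarily depends on $\beta$, $C_0$ and $d(x_0,\bdary\Omega)$ as well.)
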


\section{Main proofs}\label{sec:main proofs}
\begin{proof}[Proof of Theorem~\ref{thm:equivalence of Sob poi via capacity}]
We first show that condition $ii$ implies condition $i$. Fix a function $u\in C(\Omega)$. Pick a real number $b$ such that both $|\{x\in \Omega:u(x)\geq b\}|$ and $|\{x\in \Omega:u(x)\leq b\}|$ are at least $|\Omega|/2$. It suffices to show the fractional $(q,p)$-Sobolev-Poincar\'e inequality with $|u-u_\Omega|$ replaced by $|u-b|$, and by replacing $u$ with $u-b$, we may assume that $b=0$. Write $v_{+}=\max\{u,0\}$ and $v_{-}=-\min\{u,0\}$. In the sequel $v$ denotes either $v_{+}$ or $v_{-}$; all the statements below are valid in both cases. Without loss of generality, we may assume that $v\geq 0$.

For each $j\in \mathbb{Z}$, we define $v_j(x)=\min\{2^j,\max\{0,v(x)-2^j\}\}$. We next prove the following inequality
\begin{align}\label{eq:sufficient ineq}
2^{qj}|\{x\in \Omega:v_j(x)\geq 2^j\}|\leq C\Big(\int_\Omega g_{v_j}(x)dx\Big)^{q/p}.
\end{align}
To see it, notice that $2^{-j}v_j|_{B_0}=0$ and $2^{-j}v_j|_{F_j}\geq 1$, where $F_j=\{x\in \Omega:v_j(x)\geq 2^j\}$. So by~\eqref{eq:capacity required}, we obtain that
\begin{align*}
|F_j|^{p/q}\leq C\int_\Omega g_{2^{-j}v_j}(x)dx.
\end{align*}
Note that $g_{2^{-j}v_j}=2^{-pj}g_{v_j}$. Thus we finally arrive at
\begin{align*}
2^{pj}|F_j|^{p/q}\leq C\int_\Omega g_{v_j}(x)dx,
\end{align*}
which is the desired estimate~\eqref{eq:sufficient ineq}.

The fractional $(q,p)$-Sobolev-Poincar\'e inequality now follows from the weak type estimates via a standard argument. Write $B_{y}=B(y,\tau d(y,\bdary\Omega))$ and $A_k=F_{k-1}\backslash F_{k}$.
\begin{align*}
\int_\Omega|v(x)|^qdx&\leq \sum_{k\in \mathbb{Z}}2^{(k+1)q}|A_k|\leq C\sum_{k\in \mathbb{Z}}\Big(\int_\Omega g_{v_k}(x)dx\Big)^{q/p}\\
&\leq C\Big(\sum_{k\in \mathbb{Z}}\int_\Omega g_{v_k}(x)dx\Big)^{q/p}\\
&\leq C\Big(\sum_{k\in \mathbb{Z}}(I_1^{k}+I_2^{k})\Big)^{q/p},
\end{align*}
where 
\begin{align*}
I_1^k=\sum_{i\leq k+1}\sum_{j\geq k+1}\int_{A_i}\int_{A_j\cap B_y}\frac{|v_k(y)-v_k(z)|^p}{|y-z|^{n+p\delta}}dzdy
\end{align*}
and
\begin{align*}
I_2^k=\sum_{i\geq k+1}\sum_{j\leq k+1}\int_{A_i}\int_{A_j\cap B_y}\frac{|v_k(y)-v_k(z)|^p}{|y-z|^{n+p\delta}}dzdy.
\end{align*}
For $y\in A_i$ and $z\in A_j$ with $j-1>i$, $|v(y)-v(z)|\geq |v(z)|-|v(y)|\geq 2^{j-2}$. Hence,
\begin{align}\label{eq:easy consequence}
|v_k(y)-v_{k}(z)|\leq 2^{k+1}\leq 4\cdot 2^{k+1-j}|v(y)-v(z)|.
\end{align}
Since the estimate
\begin{align*}
|v_k(y)-v_k(z)|\leq |v(y)-v(z)|
\end{align*}
holds for every $k\in \mathbb{Z}$, ~\eqref{eq:easy consequence} is valid whenever $i\leq k\leq j$ and $(y,z)\in A_i\times A_j$. It follows from~\eqref{eq:easy consequence} that
\begin{align*}
\sum_{k\in \mathbb{Z}} I_1^k\leq 4^p\sum_{k\in\mathbb{Z}}\sum_{i\leq k+1}\sum_{j\geq k+1}2^{p(k+1-j)}\int_{A_i}\int_{A_j\cap B_y}\frac{|v(y)-v(z)|^p}{|y-z|^{n+p\delta}}dzdy.
\end{align*}
Since $\sum_{k=i-1}^{j-1} 2^{p(k+1-j)}\leq (1-2^{-p})^{-1}$, changing the order of the summation yields that the right hand side in the above inequality is bounded by
\begin{align*}
\frac{4^p}{1-2^{-p}}\int_\Omega g_v(y)dy.
\end{align*}
The estimate of $I_2^k$ is similar. Thus, we have proved that
\begin{align*}
\int_\Omega|v(x)|^qdx\leq C\Big(\int_\Omega g_v(y)dy\Big)^{q/p}.
\end{align*}
The desired fractional $(q,p)$-Sobolev-Poincar\'e inequality~\eqref{eq:fractional Sob Poin} follows from the above inequality by noticing that $|u|=v_{+}+v_{-}$ and that $|v_{\pm}(y)-v_{\pm}(z)|\leq |u(y)-u(z)|$ for all $y,z\in\Omega$.

The implication from condition $ii$ to condition $i$ is easier. To see it, fix a measurable set $A\subset \Omega$ such that $A\cap B_0=\emptyset$ and a function $u\in C(\Omega)$ such that $u|_A\geq 1$ and $u|_{B_0}=0$. If $u_\Omega\leq \frac{1}{2}$, then by~\eqref{eq:fractional Sob Poin} we have
\begin{align*}
2^{-q}|A|&\leq \int_A |u(x)-u_\Omega|^qdx\leq \int_\Omega |u(x)-u_\Omega|^qdx\\
&\leq C\Big(\int_\Omega g_u(y)dy\Big)^{q/p}.
\end{align*}
If $u_\Omega\geq \frac{1}{2}$, then by~\eqref{eq:fractional Sob Poin} we have
\begin{align*}
2^{-q}|A|&\leq 2^{-q}\frac{|\Omega|}{|B_0|}|B_0|\leq \frac{|\Omega|}{|B_0|}\int_{B_0}|u(x)-u_\Omega|^qdx\\
&\leq \frac{|\Omega|}{|B_0|}C\Big(\int_\Omega g_u(y)dy\Big)^{q/p}.
\end{align*}
Combining the above two estimates, we conclude that
\begin{align*}
|A|^{p/q}\leq C\int_\Omega g_u(x)dx,
\end{align*}
where $C=C(\Omega,B_0,p,q)$. Taking the infimum over all such $u$ gives us~\eqref{eq:capacity required}.

\end{proof}

\begin{proof}[Proof of Theorem~\ref{thm:main thm}]
Let $B_0=B(x_0,\frac{d(x_0,\bdary\Omega)}{4M})$. Assume that $p<n/\delta$, $1<s<\frac{n}{n-p\delta}$ and $1\leq p\leq q<\frac{np}{s(n-p\delta)+(s-1)(p-1)}$. Choose $\Delta>0$ such that
$$2\Delta=\frac{np}{q}-s(n-p\delta)-(s-1)(p-1).$$
It suffices to show, by Theorem~\ref{thm:equivalence of Sob poi via capacity}, that there exists a constant $C=C(\Omega,p,q,B_0)$ such that for every measurable set $A\subset \Omega$ with $A\cap B_0=\emptyset$, we have
\begin{align*}
|A|^{p/q}\leq C\inf \int_\Omega g_u(x)dx
\end{align*}
whenever $u\in C(\Omega)$ satisfies $u|_A\geq 1$ and $u|_{B_0}=0$. 

For any $x\in A$, we obtain from Lemma~\ref{lemma:chain} a finite chain of balls $B_i$, $i=0,1,\cdots,k$, satisfying conditions 1-6 with $M>2/\tau$.  For all $i=0,1,\cdots,k$, we have
\begin{equation}\label{eq:boundary estimate}
B_i\subset B(y,\tau d(y,\bdary\Omega)), \quad \text{if}\ y\in B_i.
\end{equation}
To see this, fix $y\in B_i$ and let $z$ be any other point in $B_i$, then by condition 3 in Lemma~\ref{lemma:chain},
\begin{align*}
|z-y|&\leq |y-x_i|+|x_i-z|\leq 2r_i\leq 2\frac{d(B_i,\bdary\Omega)}{M}\\
&\leq \frac{2}{M}d(y,\bdary\Omega)<\tau d(y,\bdary\Omega).
\end{align*}

In order to estimate $|A|$, we divide $A$ into the ``bad" and ``good" parts. Set 
\begin{align*}
\mathscr{G}=\Big\{x\in A|u_{B_x}\geq \frac{1}{2}\Big\}\quad \text{and}\quad \mathscr{B}=A\backslash \mathscr{G}.
\end{align*}
We have $|A|\leq |\mathscr{G}|+|\mathscr{B}|$ and we first estimate $|\mathscr{G}|$.

By condition 1 in Lemma~\ref{lemma:chain}, we have
\begin{align*}
\frac{1}{2}&\leq |u_{B_k}-u_{B_0}|\leq \sum_{i=0}^{k-1}|u_{B_i}-u_{B_{i+1}}|\\
&\leq \sum_{i=0}^{k-1}\Big(|u_{B_i}-u_{B_i\cap B_{i+1}}|+ |u_{B_{i+1}}-u_{B_i\cap B_{i+1}}|\Big)\\
&\lesssim \sum_{i=0}^k \frac{1}{|B_i|}\int_{B_i}|u(y)-u_{B_i}|dy.
\end{align*}

For a ball $B_i$,
\begin{align*}
\frac{1}{|B_i|}&\int_{B_i}|u(y)-u_{B_i}|dy \leq \frac{1}{|B_i|}\int_{B_i}\Big(\frac{1}{|B_i|}\int_{B_i}|u(y)-u(z)|^pdz \Big)^{1/p}dy\\
&=\frac{1}{|B_i|^{1+1/p}}\int_{B_i}\Big(\int_{B_i}|u(y)-u(z)|^pdz \Big)^{1/p}dy\\
&\lesssim |B_i|^{\delta/n-1}\int_{B_i}\Big(\int_{B_i}\frac{|u(y)-u(z)|^p}{|y-z|^{n+p\delta}}dz \Big)^{1/p}dy
\end{align*}
Set 
\begin{align*}
g(y):=\Big(\int_{\Omega\cap B(y,\tau d(y,\bdary\Omega))}\frac{|u(y)-u(z)|^p}{|y-z|^{n+p\delta}}dz \Big)^{1/p}
\end{align*}
By~\eqref{eq:boundary estimate} and condition 2 in Lemma~\ref{lemma:chain},
\begin{align*}
\sum_{i=0}^k \frac{1}{|B_i|}&\int_{B_i}|u(y)-u_{B_i}|dy\\
&\lesssim \sum_{i=0}^k|B_i|^{\delta/n-1}\int_{B_i}\Big(\int_{B_i}\frac{|u(y)-u(z)|^p}{|y-z|^{n+p\delta}}dz \Big)^{1/p}dy\\
&\leq \sum_{i=0}^k|B_i|^{\delta/n-1}\int_{B_i}\Big(\int_{B(y,\tau d(y,\bdary\Omega))}\frac{|u(y)-u(z)|^p}{|y-z|^{n+p\delta}}dz \Big)^{1/p}dy\\
&\lesssim \sum_{i=0}^k r_i^{\delta-n/p}\Big(\int_{B_i}g(y)^pdy\Big)^{1/p}.
\end{align*}
Thus we conclude that
\begin{align*}
1\lesssim\sum_{i=0}^k r_i^{\delta-n/p}\Big(\int_{B_i}g(y)^pdy\Big)^{1/p}.
\end{align*}
H\"older's inequality implies
\begin{equation*}
1\lesssim \Big(\sum_{i=0}^kr_i^{\kappa p/(p-1)} \Big)^{(p-1)/p}\Big(\sum_{i=0}^kr_i^{p(-\kappa+\delta-n/p )}\int_{B_i}g(y)^pdy \Big)^{1/p},
\end{equation*}
where $\kappa=\frac{(s-1)(p-1)+\Delta}{sp}$. Using condition 6 from Lemma~\ref{lemma:chain}, one can easily conclude
\begin{equation*}
\sum_{i=0}^kr_i^{\kappa p/(p-1)}\leq \sum_{i=0}^\infty (2^{-i})^{\kappa p/(p-1)}2^{i(s-1)/s}<C.
\end{equation*}
Therefore,
\begin{equation}\label{eq:hk23}
\sum_{i=0}^kr_i^{p(-\kappa+\delta-n/p )}\int_{B_i}g(y)^pdy \geq C,
\end{equation}
where the constant $C$ depends only on $p$, $n$, $\Delta$ and the constant from $s$-John condition.

By condition 2 from Lemma~\ref{lemma:chain}, $Cr_i\geq |x-y|^s$, for $y\in B_i$, and since $p(-\kappa+\delta-n/p )<0$ according to our choice $p\leq n/\delta$, we obtain
\begin{equation*}
r_i^{-\kappa p-n+\delta}\lesssim |x-y|^{s(-\kappa p-n+p\delta)}
\end{equation*}
for $y\in B_i$. For $y\in B_i\cap (2^{j+1}B_k\backslash 2^{j}B_k)$, we have $|x-y|\approx 2^jr_k$ and hence for such $y$,
\begin{equation}\label{eq:hk24}
r_i^{-\kappa p-n+p\delta}\lesssim (2^jr_k)^{s(-\kappa p-n+p\delta)}.
\end{equation}
Combining~\eqref{eq:hk23} with~\eqref{eq:hk24} leads to
\begin{align*}
1&\lesssim \sum_{i=0}^kr_i^{p(-\kappa+\delta-n/p )}\int_{B_i}g(y)^pdy\lesssim (r_k)^{s(-\kappa p-n+p\delta)}\int_{B_i}g(y)^pdy\\
&+\sum_{j=0}^{|\log r_k|}(2^jr_k)^{s(-\kappa p-n+p\delta)}\int_{(2^{j+1}B_k\backslash 2^{j}B_k)\cap\Omega}g(y)^pdy\\
&\lesssim \sum_{l=0}^{|\log r_k|+1}(2^lr_k)^{s(-\kappa p-n+p\delta)}\int_{2^lB_k\cap\Omega} g(y)^pdy.
\end{align*}
On the other hand,
\begin{equation*}
\sum_{l=0}^{|\log r_k|+1}(2^lr_k)^\Delta <r_k^\Delta\sum_{l=-\infty}^{|\log r_k|+1}2^{l\Delta}<C.
\end{equation*}
Comparing the above two estimates, we conclude that there exists an $l$ (depending on $\Delta$) such that
\begin{equation*}
(2^lr_k)^\Delta\lesssim (2^lr_k)^{s(-\kappa p-n+p\delta)}\int_{2^lB_k\cap\Omega}g(y)^pdy.
\end{equation*}
It follows that,
\begin{equation*}
\int_{\Omega\cap 2^lB_k}g(y)^pdy\gtrsim (2^lr_k)^{s(n+\kappa p-p\delta)+\Delta}=(2^lr_k)^{s(n-p\delta)+(s-1)(p-1)+2\Delta}.
\end{equation*}
In other words, there exists an $R_x\geq d(x,\bdary\Omega)/2$ with
\begin{equation*}
\Big(\int_{\Omega\cap B(x,R_x)}g(y)^pdy\Big)^{\frac{np}{q[s(n-p\delta)+(s-1)(p-1)+2\Delta]}}\gtrsim (R_x^n)^{p/q}.
\end{equation*}
Note that according to our choice of $\Delta$, the above estimate reduces to the following form:
\begin{align*}
\int_{\Omega\cap B(x,R_x)}g(y)^pdy\gtrsim |B(x,R_x)|^{p/q}.
\end{align*}
Applying the Vitali covering lemma to the covering $\{B(x,R_x)\}_{x\in E}$ of the set $\mathscr{B}$, we can select pairwise disjoint balls $B_1,\dots,B_k,\dots$ such that $\mathscr{B}\subset \bigcup_{i=1}^\infty 5B_i$. Let $r_i$ denote the radius of the ball $B_i$. Then
\begin{align*}
|\mathscr{G}|&\leq \sum_{i=1}^\infty |5B_i|=5^n\sum_{i=1}^\infty |B_i|
\lesssim \sum_{i=1}^\infty\Big(\int_{\Omega\cap B_i}g_u(y)dy\Big)^{\frac{q}{p}}\\
&\lesssim \Big(\sum_{i=1}^\infty\int_{\Omega\cap B_i}g_u(y)dy\Big)^{\frac{q}{p}}
\lesssim \Big(\int_{\Omega}g_u(y)dy\Big)^{\frac{q}{p}}.
\end{align*}

We next estimate $|\mathscr{B}|$. Note that $\mathscr{B}\subset \bigcup_{x\in \mathscr{B}}B_x$. We may use the Besicovitch covering theorem to select a subcovering $\{B_{x_i}\}_{i\in \mathbb{N}}$. Since $u\geq 1$ on $A$, and $u_{B_{x_i}}\leq 1/2$, we obtain that 
\begin{align*}
|u(y)-u_{B_{x_i}}|^q\geq 2^{-q}
\end{align*}
for $y\in A\cap B_{x_i}$. By the fractional $(q,p)$-Sobolev-Poincar\'e inequality for balls, we get
\begin{align*}
|A\cap B_{x_i}|&\leq C\int_{A\cap B_{x_i}}|u(y)-u_{B_{x_i}}|^qdy\\
&\leq C\Big(\int_{B_{x_i}}g_u(y)dy\Big)^{q/p}.
\end{align*}
Summing over all balls $B_{x_i}$, we obtain that
\begin{align*}
|\mathscr{B}|^{p/q}\leq C\int_{\Omega}g_u(y)dy.
\end{align*}
The proof of Theorem~\ref{thm:main thm} is now complete.

\end{proof}

\begin{remark}\label{rmk:rmk on the case p=1}
In Theorem~\ref{thm:main thm}, $q$ is assumed to be strictly less than $\frac{np}{s(n-p\delta)+(s-1)(p-1)}$. However, one can easily adapt the proof of Theorem~\ref{thm:main thm} to show that when $s=1$ or $p=1$, $q$ can reach the critical value. Indeed, we only need to use a variant of Lemma~\ref{lemma:chain}. Namely, for each $x\in \Omega$, we may join $x$ to $x_0$ via a infinite chain of balls $\{B_i\}_{i\in \mathbb{N}}$ with all the properties listed in Lemma~\ref{lemma:chain} except condition 5 replaced with
\begin{align*}
|x-x_i|\leq cr_i^{1/s}\to 0 
\end{align*}
as $i\to \infty$. Then following the proof of Theorem~\ref{thm:main thm}, we easily deduce the following Riesz potential type estimate:
\begin{align*}
|u(x)-u_{B_0}|\lesssim \sum_{i=1}^\infty r_i^{\delta-n}\int_{B_i}g(y)dy\lesssim \int_\Omega\frac{g(y)}{|x-y|^{s(n-\delta)}}dy.
\end{align*}
Note that 
\begin{align*}
\int_\Omega\frac{g(y)}{|x-y|^{s(n-\delta)}}dy=I_\delta(\chi_\Omega g)(x).
\end{align*}
Thus we conclude that
\begin{align*}
|u(x)-u_{B_0}|\lesssim I_\delta(\chi_\Omega g)(x).
\end{align*}
For $s=1$ and $p>1$, the claim follows from the strong type estimate in Theorem~\ref{thm:Riesz potential estimate}. For $p=1$, the claim follows from the weak type estimate~\eqref{eq:weak type estimate for Riesz}.
\end{remark}

\begin{proof}[Proof of Example~\ref{example:sharp}]
We will use the mushroom-like domain as used in~\cite{hak98}. The mushroom-like domain $\Omega\subset\bR^n$ consists of a cube $Q$ and an attached infinite sequences of mushrooms $F_1,F_2,\cdots$ growing on the ``top" of the cube. By a mushroom $F$ of size $r$, we mean a cap $\mathscr{C}$, which is a ball of radius $r$, and an attached cylindrical stem $\mathscr{P}$ of height $r$ and radius $r^s$. The mushrooms are disjoint, and the corresponding cylinders are perpendicular to the side of the cube that we have selected as the top of the cube. We can make the mushrooms pairwise disjoint if the number $r_i$ associated with $F_i$ converges to 0 sufficiently fast as $i\to \infty$.

Let $u_i$ be a piecewise linear function on $\Omega$ such that $u_i=0$ outside $F_i$, $u_i=1$ on the cap $\mathscr{C}_i$, and $u_i$ is linear on the associated cylinder $\mathscr{P}_i$. Assume that $1\leq s<\frac{n}{n-p\delta}$, and that one can prove the fractional $(q,p)$-Sobolev-Poincar\'e inequality with $q>\frac{np}{s(n-p\delta)+(s-1)(p-1)}$.

Note that 
\begin{align*}
\Big(\int_\Omega|u(x)-u_\Omega|^qdx\Big)^{1/q}\gtrsim r_i^{n/q}.
\end{align*}
On the other hand,
\begin{align*}
\Big(\int_\Omega&\int_{\Omega\cap B(x,\tau d(x,\bdary\Omega))}\frac{|u(x)-u(y)|^p}{|x-y|^{n+p\delta}}dx \Big)^{1/p}\\
&=\Big(\int_{\mathscr{P}_i}\int_{\mathscr{P}_i\cap B(x,\tau d(x,\bdary\Omega))}\frac{|u(x)-u(y)|^p}{|x-y|^{n+p\delta}}dx \Big)^{1/p}\\
&\lesssim \Big(r_i^{-p}\int_{\mathscr{P}_i}d(x,\bdary\Omega)^{p(1-\delta)}dx \Big)^{1/p}\\
&\lesssim \Big(r_i^{s(n-p\delta)+(s-1)(p-1)}\Big)^{1/p}.\\
\end{align*}
Thus we obtain that for all $i\in \mathbb{N}$
\begin{align*}
r_i^{n/q}\lesssim r_i^{\frac{s(n-p\delta)+(s-1)(p-1)}{p}},
\end{align*}
which is impossible if $q>\frac{np}{s(n-p\delta)+(s-1)(p-1)}$.
\end{proof}

\section{Fractional $(q,p)$-Sobolev-Poincar\'e inequalities in domains with quasihyperbolic boundary condition}
Recall that a domain $\Omega\subset\bR^n$, $n\geq 2$, is said to satisfy a $\beta$-quasihyperbolic boundary condition, $\beta\in (0,1]$, if there exist a point $x_0\in \Omega$ and a constant $C_0$ such that 
\begin{equation}\label{eq:quasihyperbolic boundary condition}
k_\Omega(x,x_0)\leq \frac{1}{\beta}\log\frac{d(x_0,\bdary\Omega)}{d(x,\bdary\Omega)}+C_0
\end{equation}
holds for all $x\in \Omega$.


\begin{proof}[Proof of Theorem~\ref{thm:quasihyperbolic bc case}]
Fix $Q_0\subset\Omega$ the central Whitney cube containing $x_0$. For each measurable set $A\subset\Omega$ with $A\cap Q_0=\emptyset$, let $u\in C(\Omega)$ satisfy $u|_A\geq 1$ and $u|_{Q_0}=0$. As in the proof of Theorem~\ref{thm:main thm}, we divide $A$ into ``good" and ``bad" parts. Set 
\begin{align*}
\mathscr{G}=\Big\{x\in A|u_{Q}\geq \frac{1}{2}\quad \text{for some Whitney cube}\quad Q\ni x\Big\}\quad \text{and}\quad \mathscr{B}=A\backslash \mathscr{G}.
\end{align*}
We have $|A|\leq |\mathscr{G}|+|\mathscr{B}|$ and we first estimate $|\mathscr{B}|$.

For points $x\in \mathscr{B}$, the standard fractional $(p',p)$-Sobolev-Poincar\'e inequality on cubes provides a trivial estimate
\begin{align*}
|A\cap Q|^{1/p'}\leq C\Big(\int_Q|u-u_Q|^{p'}dy\Big)^{1/p'}\leq C\Big(\int_Qg_u(y)dy\Big)^{1/p}
\end{align*}
on Whitney cube $Q$ containing $x$. Since $q<p'$ this yields
\begin{align*}
\int_Qg_u(y)dy\geq \frac{1}{C}|A\cap Q|^{p/q}
\end{align*}
and by summing over all such Whitney cubes we deduce that 
\begin{align}\label{eq:34}
\int_\Omega g_u(y)dy\geq \frac{1}{C}|\mathscr{B}|^{p/q}.
\end{align}

We next estimate $|\mathscr{G}|$ and our aim is the show that
\begin{align}\label{eq:35}
\int_\Omega g_u(y)dy\geq \frac{1}{C}|\mathscr{G}|^{p/q}
\end{align}
and then the conclusion follows from Theorem~~\ref{thm:equivalence of Sob poi via capacity}.

For each $x\in \mathscr{G}$, let $Q(x)$ be the Whitney cube containing $x$ for which $u_{Q(x)}\geq \frac{1}{2}$. Then the chaining argument used in the proof of Theorem~\ref{thm:main thm} gives us the estimate
\begin{align}\label{eq:36}
1\lesssim \sum_{Q\in P(Q(x))}(\diam Q)^{\delta-n/p}\Big(\int_{Q}g_u(y)dy\Big)^{1/p};
\end{align}
recall that $P(Q(x))$ consists of the collection of all the Whitney cubes which intersect the quasihyperbolic geodesic joining $x_0$ to the center of $Q(x)$.

Integrating~\eqref{eq:36} with respect to the Lebesgue measure and interchanging the order of summation and integration yields
\begin{align*}
|\mathscr{G}|&\lesssim \int_{\mathscr{G}}\sum_{Q\in P(Q(x))}(\diam Q)^{\delta-n/p}\Big(\int_{Q}g_u(y)dy\Big)^{1/p}dx\\
&=\sum_{Q\in \mathcal{W}}|S(Q)\cap \mathscr{G}|(\diam Q)^{\delta-n/p}\Big(\int_{Q}g_u(y)dy\Big)^{1/p}.\numberthis\label{eq:37}
\end{align*}
Applying H\"older's inequality leads to
\begin{align*}
|\mathscr{G}|&\lesssim \Big(\sum_{Q\in \mathcal{W}}|S(Q)\cap \mathscr{G}|^{\frac{p}{p-1}}|Q|^{-\frac{n-p\delta}{n(p-1)}} \Big)^{\frac{p-1}{p}}\Big(\sum_{Q\in \mathcal{W}}\int_{Q}g_u(y)dy \Big)^{\frac{1}{p}}\\
&\leq\Big(\sum_{Q\in \mathcal{W}}|S(Q)\cap \mathscr{G}|^{\frac{p}{p-1}}|Q|^{-\frac{n-p\delta}{n(p-1)}} \Big)^{\frac{p-1}{p}}\Big(\int_{\Omega}g_u(y)dy \Big)^{\frac{1}{p}}.
\end{align*}
Applying Lemma~\ref{lemma:for proof of qhbc} below, we find that
\begin{align*}
|\mathscr{G}|\lesssim |\mathscr{G}|^{(q-1)/q}\Big(\int_{\Omega}g_u(y)dy \Big)^{\frac{1}{p}},
\end{align*}
which proves~\eqref{eq:35}.

\end{proof}

\begin{lemma}\label{lemma:for proof of qhbc}
Fix $p$ and $q$ as in Theorem~~\ref{thm:quasihyperbolic bc case}. Then there exists a constant $C=C(n,p,q,\beta)$ such that
\begin{align*}
\sum_{Q\in \mathcal{W}}|S(Q)\cap E|^{\frac{p}{p-1}}|Q|^{-\frac{n-p\delta}{n(p-1)}}\leq C|E|^{\frac{p}{p-1}\frac{q-1}{q}}
\end{align*}
whenever $E\subset\Omega$.
\end{lemma}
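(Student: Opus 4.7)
The plan is to interpolate between the trivial bound $|S(Q)\cap E|\leq |E|$ and the geometric bound $|S(Q)\cap E|\leq |S(Q)|\leq C|Q|^{\tau}$ with $\tau=\frac{2\beta}{1+\beta}$ (Lemma~\ref{lemma:bound diam of shadow}), tuned so that the negative weight $|Q|^{-(n-p\delta)/(n(p-1))}$ on the left is absorbed and converted into a small \emph{positive} power of $|Q|$. The resulting sum can then be controlled via the chain estimate of Lemma~\ref{lemma:key lemma}. This two-bound tradeoff is essentially the only mechanism in the proof.

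Concretely, set $\alpha=p/(p-1)$ and $\sigma=(n-p\delta)/(np)$, so the target reads
\begin{equation*}
\sum_{Q\in\mathcal{W}}|S(Q)\cap E|^{\alpha}|Q|^{-\alpha\sigma}\lesssim |E|^{\alpha(q-1)/q}.
\end{equation*}
Fix $\varepsilon>0$ (to be chosen) and let $s=\alpha(1-\sigma/\tau)-\varepsilon/\tau$. Decompose $|S(Q)\cap E|^{\alpha}=|S(Q)\cap E|^{s}\cdot|S(Q)\cap E|^{\alpha-s}$ and apply Lemma~\ref{lemma:bound diam of shadow} to the second factor. A direct computation shows $\tau(\alpha-s)-\alpha\sigma=\varepsilon$, so the sum is bounded by a constant multiple of $\sum_{Q}|S(Q)\cap E|^{s}|Q|^{\varepsilon}$. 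Assuming $s\geq 1$, the trivial inequality $|S(Q)\cap E|^{s}\leq |E|^{s-1}|S(Q)\cap E|$ combined with Fubini yields
\begin{equation*}
\sum_{Q}|S(Q)\cap E|^{s}|Q|^{\varepsilon}\leq |E|^{s-1}\int_{E}\sum_{Q\in P(Q_{x})}|Q|^{\varepsilon}\,dx\lesssim |E|^{s},
\end{equation*}
where $Q_{x}$ denotes the Whitney cube containing $x$ and the inner sum is uniformly bounded by Lemma~\ref{lemma:key lemma}.

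The delicate (and only real) point is verifying that a single $\varepsilon>0$ can be chosen so that \emph{simultaneously} (i) $s\geq 1$, which legitimizes the splitting $|S(Q)\cap E|^{s}\leq |E|^{s-1}|S(Q)\cap E|$; and (ii) $s\geq \alpha(q-1)/q$, so that the finite-measure bound $|E|\leq |\Omega|$ upgrades $|E|^{s}$ to $|E|^{\alpha(q-1)/q}$ and closes the estimate. A short computation shows (i) reduces to $\tau-p\sigma\geq \varepsilon(p-1)$, which is feasible precisely when $\tau>p\sigma$, equivalent to the lower bound $p>\frac{1}{\delta}(n-n\frac{2\beta}{1+\beta})$ in Theorem~\ref{thm:quasihyperbolic bc case}; and (ii) reduces to $\alpha(\tau/q-\sigma)\geq \varepsilon$, feasible precisely when $q<\tau/\sigma=\frac{2\beta}{1+\beta}\frac{np}{n-p\delta}$, the upper bound on $q$ in the theorem. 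Both hypotheses hold strictly, so any sufficiently small $\varepsilon>0$ satisfies both constraints. The main obstacle is therefore essentially algebraic: recognizing that the two sharp thresholds in the theorem statement are exactly the two constraints that make the interpolation parameter $s$ straddle $[1,\alpha]$ in the right spot.
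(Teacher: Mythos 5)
Your proof is correct and takes essentially the same route as the paper: both arguments split the power of $|S(Q)\cap E|$ so that the shadow estimate \eqref{eq:29} turns the negative weight $|Q|^{-\frac{n-p\delta}{n(p-1)}}$ into a small positive power $|Q|^{\varepsilon}$, then interchange the summation (your Fubini step) and invoke the uniform bound \eqref{eq:27} on $\sum_{Q\in P(Q_1)}|Q|^{\varepsilon}$, with your two feasibility constraints on $\varepsilon$ matching exactly the choice $\varepsilon=\bigl(\tfrac{2\beta}{(1+\beta)q}-\tfrac{n-p\delta}{np}\bigr)\tfrac{p}{p-1}>0$ used in the paper. The only difference is bookkeeping: the paper factors $|S(Q)\cap E|^{\frac{p}{p-1}}$ as $|S(Q)\cap E|^{\frac{p}{p-1}(\frac1p-\frac1q)}\cdot|S(Q)\cap E|^{\frac{p}{(p-1)q}}\cdot|S(Q)\cap E|$ and bounds the three factors by $|E|$, $|S(Q)|$ and $\sum_{Q_1\subset S(Q)}|Q_1\cap E|$ respectively, so the exponent $\frac{p}{p-1}\frac{q-1}{q}$ comes out exactly, whereas your $\varepsilon$-parametrized split lands on $|E|^{s}$ with $s$ slightly larger and then uses $|E|\le|\Omega|$ to come back down --- harmless, since the constant already depends on the domain through Lemma~\ref{lemma:key lemma}.
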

\begin{proof}
For simplicity, we write $p^*=\frac{np}{n-p\delta}$, $\kappa=\frac{p}{p-1}$ and $\lambda=\frac{q}{q-1}$. Then $\frac{n-p\delta}{n(p-1)}=\frac{\kappa}{p^*}$. Thus
\begin{align*}
\sum_{Q\in \mathcal{W}}&|S(Q)\cap E|^{\kappa}|Q|^{-\frac{\kappa}{p^*}}\leq |E|^{\frac{\kappa}{p}-\frac{\kappa}{q}}\sum_{Q\in \mathcal{W}}\sum_{Q_1\in S(Q)}|Q_1\cap E|\Big(\frac{|S(Q)|^{\frac{1}{q}}}{|Q|^{\frac{1}{p^*}}}\Big)^{\kappa}\\
&=|E|^{\frac{\kappa}{p}-\frac{\kappa}{q}}\sum_{Q_1\in\mathcal{W}}|Q_1\cap E|\sum_{Q\in P(Q_1)}\Big(\frac{|S(Q)|^{\frac{1}{q}}}{|Q|^{\frac{1}{p^*}}}\Big)^{\kappa}\\
&\lesssim |E|^{\frac{\kappa}{p}-\frac{\kappa}{q}}\sum_{Q_1\in\mathcal{W}}|Q_1\cap E|\sum_{Q\in P(Q_1)}|Q|^{(\frac{2\beta}{1+\beta}\frac{1}{q}-\frac{1}{p^*})\kappa}\\
&\lesssim |E|^{\frac{\kappa}{p}-\frac{\kappa}{q}}\sum_{Q_1\in\mathcal{W}}|Q_1\cap E|=|E|^{\frac{\kappa}{\lambda}},
\end{align*}
where we have used~\eqref{eq:29} and~\eqref{eq:27} with~$\varepsilon=(\frac{2\beta}{(1+\beta)q}-\frac{1}{p^*})\kappa>0$.

\end{proof}

\begin{proof}[Proof of Example~\ref{example:qhbc}]
The construction here is similar to that used in the proof of Example~\ref{example:sharp} and thus we only point out the difference.  The mushroom-like domain $\Omega\subset\bR^n$ consists of a cube $Q$ and an attached infinite sequences of mushrooms $F_1,F_2,\cdots$ growing on the ``top" of the cube as in Example~\ref{example:sharp}. Now, by a mushroom $F$ of size $r$, we mean a cap $\mathscr{C}$, which is a ball of radius $r$, and an attached cylindrical stem $\mathscr{P}$ of height $r^\tau$ and radius $r^\sigma$. The mushrooms are disjoint, and the corresponding cylinders are perpendicular to the side of the cube that we have selected as the top of the cube. We can make the mushrooms pairwise disjoint if the number $r_i$ associated with $F_i$ converges to 0 sufficiently fast as $i\to \infty$.

It is easy to show that $\Omega$ satisfies the $\beta$-quasihyperbolic boundary condition~\eqref{eq:quasihyperbolic boundary condition} if $\sigma=\frac{1+\beta}{2\beta}\leq \tau$; see for instance~\cite[Example 5.5]{kot02}. We next show that $\Omega$ is not a fractional $(q,p)$-Sobolev-Poincar\'e domain if 
\begin{align}\label{eq:56}
q>\frac{np}{\sigma(n-p\delta)+(p-1)(\sigma-\tau)}.
\end{align}
When $\tau=\sigma=\frac{1+\beta}{2\beta}$, ~\eqref{eq:56} implies that $\Omega$ is a $\beta$-quasihyperbolic boundary condition boundary which does not support a fractional $(q,p)$-Sobolev-Poincar\'e inequality. This verifies Example~\ref{example:qhbc}.

Let $u_i$ be a piecewise linear function on $\Omega$ such that $u_i=0$ outside $F_i$, $u_i=1$ on the cap $\mathscr{C}_i$, and $u_i$ is linear on the associated cylinder $\mathscr{P}_i$. Assume that the fractional $(q,p)$-Sobolev-Poincar\'e inequality holds on $\Omega$.

Note that 
\begin{align*}
\Big(\int_\Omega|u(x)-u_\Omega|^qdx\Big)^{1/q}\gtrsim r_i^{n/q}.
\end{align*}
On the other hand,
\begin{align*}
\Big(\int_\Omega&\int_{\Omega\cap B(x,\tau d(x,\bdary\Omega))}\frac{|u(x)-u(y)|^p}{|x-y|^{n+p\delta}}dx \Big)^{1/p}\\
&=\Big(\int_{\mathscr{P}_i}\int_{\mathscr{P}_i\cap B(x,\tau d(x,\bdary\Omega))}\frac{|u(x)-u(y)|^p}{|x-y|^{n+p\delta}}dx \Big)^{1/p}\\
&\lesssim \Big(r_i^{-\tau p}\int_{\mathscr{P}_i}d(x,\bdary\Omega)^{p(1-\delta)}dx \Big)^{1/p}\\
&\lesssim \Big(r_i^{\sigma(n-p\delta)+(p-1)(\sigma-\tau)}\Big)^{1/p}.\\
\end{align*}
Thus we obtain that for all $i\in \mathbb{N}$
\begin{align*}
r_i^{n/q}\lesssim r_i^{\frac{\sigma(n-p\delta)+(p-1)(\sigma-\tau)}{p}},
\end{align*}
which is impossible if $q>\frac{np}{\sigma(n-p\delta)+(\sigma-\tau)(p-1)}$.
\end{proof}

\section{Necessary conditions for the fractional $(q,p)$-Sobolev-Poincar\'e domains}

\begin{proof}[Proof of Theorem~\ref{thm:necessarility of s John}]
Fix $x\in \Omega$. Pick a curve $\gamma:[0,1]\to \Omega$ with $\gamma(0)=x$ and $\gamma(1)=x_0$ as in the definition of separation property.

Let $0<t<1$ and $\delta(t)=d(\gamma(t),C\delta(t))$, there is nothing to prove. Otherwise, the separation property implies that $\bdary B=\bdary B(\gamma(t),C\delta(t))$ separates $\gamma([0,t])\backslash B$ from $x_0$. If the component of $\Omega\backslash \bdary B$ containing $x_0$ does not contain a ball centred at $x_0$ of radius $\delta(1)/2$, then $B$ must have radius at least $\delta(1)/4$ since it intersects both $B(x_0,\delta(1)/2)$ and $\bdary\Omega$. In this case, $B'=4B$ contains $B(x_0,\delta(1)/4)$ and we may assume that $B'$ does not contain $\gamma([0,t])$ (since otherwise we are done). Thus either $\Omega\backslash \bdary B$ or $B'$ contains a ball centred at $x_0$ of radius comparable to $\delta(1)$. In either cases, we conclude from Proposition~\ref{prop:for necessarility of s John} that 
\begin{align*}
\diam \gamma([0,t])\leq C\varphi(d(\gamma(t),\bdary\Omega)),
\end{align*}
where $\varphi(t)=t^{\frac{(n-p\delta)q}{p\delta}(\frac{1}{p}-\frac{1}{q})}$.
\end{proof}

A bounded domain $\Omega\subset\bR^n$ with a distinguished point $x_0$ satisfying~\eqref{eq:s diam curve} with $\varphi(t)=t^{1/s}$ is termed $s$-diam John in~\cite{gk14}. It was proved in~\cite{gk14} that, for $s>1$, $s$-diam John domains are not necessarily $s$-John.

In~\cite[Corollary 4.1]{bk95}, it was stated that if a bounded domain $\Omega\subset\bR^n$ satisfies a separation property and supports a $(q,p)$-Sobolev-Poincar\'e inequality~\eqref{def:poincare domain} with $q>p$, then $\Omega$ is $s$-John with $s=\frac{p^2}{(n-p)(q-p)}$. One could immediately check that the proof given there was only sufficient to deduce that $\Omega$ is $s$-diam John with $s=\frac{p^2}{(n-p)(q-p)}$. In fact, combining~\cite[Example 5.1]{gk14} and~\cite[Section 4]{bk95}, one can produce an $s$-diam John domain $\Omega\subset\bR^n$ with $s=\frac{p^2}{(n-p)(q-p)}$ such that $\Omega$ supports a $(q,p)$-Sobolev-Poincar\'e inequality. Moreover, $\Omega$ is not $s'$-diam John whenever $s'<s$ and $\Omega$ is not $s$-John. 

We next briefly discuss how to construct such an example in the plane (it works in higher dimensions as well). Set 
\begin{align*}
C(r;\alpha, \beta)=C(r)=\{(x_1,x):0<x_1<r^\alpha,|x'|<r^\beta\},
\end{align*}
where $0<\alpha<\beta\leq 1$ will be specified later. The idea is very simple, we first use the mushroom-like domain $\Omega'\subset\bR^2$ constructed as in~\cite{bk95} (with different choices of parameters) and then modify $\Omega'$ to be a spiral domain $\Omega$ as in~\cite[Example 5.1]{gk14}. 

The mushroom-like domain $\Omega'\subset\bR^2$ consists of a cube $Q$ and an attached infinite sequences of mushrooms $F_1,F_2,\cdots$ growing on the ``top" of the cube as in Example~\ref{example:sharp}. Now, by a mushroom $F$ of size $r$, we mean a cap $\mathscr{C}$, which is a ball of radius $r$, and an attached cylindrical stem $C(r)$. The mushrooms are disjoint, and the corresponding cylinders are perpendicular to the side of the cube that we have selected as the top of the cube. We can make the mushrooms pairwise disjoint if the number $r_i$ associated with $F_i$ converges to 0 sufficiently fast as $i\to \infty$.

Note first that if $\beta=\alpha\frac{p+(p-1)q}{(n-1)(q-p)}$ with $n=2$, then $C(r)$ satisfies the $(q,p)$-Sobolev-Poincar\'e inequality uniformly in $r$; see~\cite{bk95}. Let $\mu=s\beta=\frac{p^2}{(2-p)(q-p)}\beta$ and $p^*=\frac{np}{n-p}$. One can show that $\Omega'$ is a $(q,p)$-Sobolev-Poincar\'e domain if 
\begin{equation}\label{eq:condition 2}
\alpha+\beta(n-1)-\frac{nq}{p^*}>0
\end{equation}
holds with $n=2$; see~\cite{bk95}. Note also that $\Omega$ is $\frac{1}{\alpha}$-John.

We next bend each mushroom $F_i$ to make it spiralling so that  the resulting domain $\Omega$ is an $s$-diam John domain. According to our choice, $s=\frac{\mu}{\beta}$. One can check that if $\beta=\alpha\frac{p+(p-1)q}{q-p}$, then~\eqref{eq:condition 2} reduces to
\begin{align}\label{eq:condition3}
\frac{1}{\beta}<\frac{p^2}{(2-p)[p+(p-1)q]}.
\end{align}
Since $p<q<p^*$, $\frac{p^2}{(2-p)[p+(p-1)q]}>1$. For any $\beta$ satisfies~~\eqref{eq:condition3} and $\beta=\alpha\frac{p+(p-1)q}{q-p}$. It is easy to check that $\frac{1}{\alpha}>\frac{\mu}{\beta}=s$. It is clear that $\Omega'$ and $\Omega$ are bi-Lipschitz equivalent and so the $(q,p)$-Sobolev-Poincar\'e inequality holds in $\Omega$ as well. Moreover, $\Omega$ satisfies all the required properties.

One could also modify the above example to the fractional $(q,p)$-Sobolev-Poincar\'e case, but the computations will be too complicated and so we omit it in the present paper.

\textbf{Acknowledgements}

The author wants to express his gratitude to Antti V.V\"ah\"akangas for posing the question in Jyv\"askyl\"a Analysis Seminar, which is the main motivation of the current paper, and for sharing the manuscript~\cite{div14}. The author also wants to thank Academy Professor Pekka Koskela for helpful discussions. 

\bibliographystyle{amsplain}

\end{document}